\newcommand{\ie}{\emph{i.e.}}
\newcommand{\eg}{\emph{e.g.}}
\newcommand{\Real}{\mathbb{R}}
\newcommand{\Nat}{\mathbb{N}}
\newcommand{\Sphere}{\mathbb{S}}
\newcommand{\dom}{\mathop{\mathrm{dom}}\nolimits}
\newcommand{\sii}{L^2}
\newcommand{\der}{\mathrm{d}}
\newtheorem{Theorem}{Theorem}
\newtheorem{Lemma}{Lemma}
\newtheorem{Proposition}{Proposition}
\theoremstyle{definition}
\newtheorem{Remark}{Remark}
\def\OMIT#1{}
\definecolor{DarkGreen}{rgb}{0,0.5,0.1} 
\newcommand\soutD{\bgroup\markoverwith
{\textcolor{DarkGreen}{\rule[.5ex]{2pt}{1pt}}}\ULon}
\newcommand\soutP{\bgroup\markoverwith
{\textcolor{blue}{\rule[.5ex]{2pt}{1pt}}}\ULon}
\newcommand{\Hm}[1]{\leavevmode{\marginpar{\tiny%
$\hbox to 0mm{\hspace*{-0.5mm}$\leftarrow$\hss}%
\vcenter{\vrule depth 0.1mm height 0.1mm width \the\marginparwidth}%
\hbox to
0mm{\hss$\rightarrow$\hspace*{-0.5mm}}$\\\relax\raggedright #1}}}
\begin{document}
%
\title{\textbf{\Large Sharp estimates for the Laplacian torsional rigidity with negative Robin boundary conditions}}
\author{Nunzia Gavitone,$^a$ \
David Krej\v{c}i\v{r}{\'\i}k\,$^b$
\ and \ Gloria Paoli\,$^a$}
\date{\small 
\begin{quote}
\emph{
\begin{itemize}
\item[$a)$]  
Dipartimento di Matematica e Applicazioni ``Renato Caccioppoli'', 
Universit\`a degli Studi di Napoli Federico II, 
Via Cintia, Monte S. Angelo, 80126 Napoli, Italy;
nunzia.gavitone@unina.it, gloria.paoli@unina.it.
\item[$b)$] 
Department of Mathematics, Faculty of Nuclear Sciences and 
Physical Engineering, Czech Technical University in Prague, 
Trojanova 13, 12000 Prague 2, Czechia;
david.krejcirik@fjfi.cvut.cz.%
\end{itemize}
}
\end{quote}
{\small 13 January 2026}}
\maketitle
%

%
\begin{abstract}
\noindent
Motivated by pioneering works of Bandle and Wagner
\cite{BW-shape,BW-sign}, given a bounded Lipschitz domain 
$\Omega \subset \mathbb R^d$ with $d\ge3$,
we consider the Robin-Laplacian torsional rigidity $\tau_\alpha(\Omega)$ 
with negative boundary parameter~$\alpha$ 
and we show that sharp inequalities for $\tau_\alpha(\Omega)$ hold if $|\alpha|$ is small enough.
In particular, we prove that, if $|\alpha|$ is smaller than 
the first non-trivial Steklov-Laplacian eigenvalue, then  the ball maximises $\tau_\alpha(\Omega)$ 
among all convex domains under perimeter or  volume constraints. 
This solves an open problem raised in~\cite{BW-sign}. 
We also prove the result in the planar case 
among simply connected sets and under perimeter constraint.
%

%
%
\end{abstract}
%
 
\section{Introduction and main results}
%
The expected lifetime of the Brownian motion 
killed on the boundary of a domain of given volume 
is the longest in the ball.
This is the celebrated Saint-Venant inequality~\cite{Polya-Szego}.
Another physical interpretation of this result is that 
the cylindrical beam is the most resistive against torsional deformations,
among all elastic beams of cross-section of given area.
 
What happens for the Brownian motion with other boundary conditions?
Is the ball still the optimal geometry?
This physical curiosity is behind considering
the boundary value problem~\cite{Campanino-DelGrosso_1976} 
\begin{equation}\label{capacity}
\left\{
\begin{aligned}
  -\Delta u &=1 
  && \mbox{in} \quad \Omega \,, \\
  \frac{\partial u}{\partial n} + \alpha \;\! u &=0  
  && \mbox{on} \quad \partial\Omega \,, 
\end{aligned}  
\right.
\end{equation}
where $\alpha \in \Real$ and~$n$ is the unit outward normal 
to a bounded Lipschitz domain $\Omega \subset \Real^d$ with $d \geq 1$.
The case of positive (respectively, negative)~$\alpha$ corresponds
to Brownian particles absorbed (respectively, created) on the boundary.
The Neumann ($\alpha=0$) and Dirichlet (formally, $\alpha=\infty$) 
realisations corresponds to the reflecting and killing 
boundary conditions, respectively.
The expected lifetime of the Brownian motion
or the \emph{torsional rigidity} is (up to a physical constant)
the quantity \cite{Banuelos-Berg-Carroll_2002,Carroll_2001}
$$
  \tau_\alpha(\Omega) := \int_\Omega u(x) \, \der x
  \,
$$ 
where~$u$ is the solution of~\eqref{capacity}.
In addition to the aforementioned direct physical interpretations,
the torsion function~$u$ plays an important role 
as a convenient gauge choice for magnetic problems
(see, \eg, \cite{Barbaroux-LeTreust-Raymond-Stockmeyer_2021,
Colbois-Lena-Provenzano-Savo_2023,Kachmar-Lotoreichik_2024}
for recent applications). 

In recent years many mathematicians studied shape optimisations problems related to $\tau_\alpha(\Omega)$ under several geometric constraints (see for instance \cite{ANT,amato2025quantitative, BW-sign,Bandle-Wagner, BW-shape, Bucur-Giacomini_2015b, gavitone2025serrin, Vikulova_2022}).
In particular, we stress that the behaviour and the properties of $\tau_\alpha(\Omega)$ depend on  the sign of $\alpha$.

In the case of absorption (\ie~$\alpha>0$), 
Bucur and Giacomini in~\cite{Bucur-Giacomini_2015b} 
established the following global optimality of the ball under volume constraint.

\begin{Theorem}[Bucur and Giacomini~\cite{Bucur-Giacomini_2015b}]
Let $d \geq 2$ and $\Omega\subset\mathbb{R}^d$ be a bounded and Lipschitz set.
If $\alpha > 0$, then
\begin{equation}\label{Saint}
  \max_{|\Omega|=c} \tau_\alpha(\Omega) = \tau_\alpha(\Omega^*)
  \,,
\end{equation}
where $\Omega^*$ is the ball of the same volume as~$\Omega$
fixed by a given positive constant~$c$. 
\end{Theorem}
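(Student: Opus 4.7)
Multiplying the torsion equation~\eqref{capacity} by $u$ and integrating by parts yields
\begin{equation*}
\tau_\alpha(\Omega) = \int_\Omega u \, \der x = \int_\Omega |\nabla u|^2 \, \der x + \alpha \int_{\partial\Omega} u^2 \, \der\sigma,
\end{equation*}
and a standard duality argument gives the variational characterisation
\begin{equation*}
\tau_\alpha(\Omega) = \max_{v \in H^1(\Omega)} \left\{ 2 \int_\Omega v \, \der x - \int_\Omega |\nabla v|^2 \, \der x - \alpha \int_{\partial\Omega} v^2 \, \der\sigma \right\}.
\end{equation*}
My plan is to compare $\Omega$ with the ball $\Omega^*$ by evaluating this functional on $\Omega^*$ using a carefully chosen competitor built from the torsion function $u$ of $\Omega$.

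The natural first attempt is to use the Schwarz symmetrisation $u^*$ of $u$ as a competitor on $\Omega^*$: equimeasurability preserves the linear term, and the P\'olya--Szeg\H{o} inequality gives $\int_{\Omega^*} |\nabla u^*|^2 \, \der x \le \int_\Omega |\nabla u|^2 \, \der x$. The serious obstacle is the boundary term: since $u^*$ is radially decreasing, its trace on $\partial\Omega^*$ is the essential infimum of $u$, and there is no comparison between this value (times $|\partial\Omega^*|$) and $\int_{\partial\Omega} u^2 \, \der\sigma$ that can be extracted from equimeasurability alone. This is precisely why the Saint-Venant inequality for Robin conditions resists the direct symmetrisation approach that works in the Dirichlet setting.

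To overcome this, I would follow a relaxation strategy in the spirit of the Bossel--Daners treatment of the first Robin eigenvalue. The idea is to embed the problem into a larger class in which $\partial\Omega$ becomes a free object controlled geometrically by the perimeter: concretely, extend every $v \in H^1(\Omega)$ by zero to the whole of $\Real^d$ and reformulate the energy as an $SBV$-functional in which the Robin trace contribution is absorbed into a perimeter-type term supported on the jump set of $v$. In this relaxed setting, a level-set analysis \emph{\`a la} Talenti combined with the classical isoperimetric inequality applied to every superlevel set $\{v > t\}$ should yield the ball as the unique maximiser. The main technical difficulty is to prove that the relaxation has the same supremum as the original variational problem and that the radial rearrangement decreases the Dirichlet energy and the boundary term simultaneously; this is where the hypothesis $\alpha > 0$ is crucial, since it makes the trace contribution a convex penalty amenable to sharp isoperimetric trace inequalities of Maz'ya type.
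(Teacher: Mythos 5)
The paper does not prove this theorem; it is stated as background and attributed to Bucur and Giacomini \cite{Bucur-Giacomini_2015b}, with the citation serving in lieu of a proof. So there is no internal argument in this paper to compare your attempt against, and what one should assess is whether your sketch constitutes a proof on its own terms.

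Your opening steps are fine: for $\alpha>0$ the functional $v\mapsto 2\int_\Omega v-\int_\Omega|\nabla v|^2-\alpha\int_{\partial\Omega}v^2$ on $W^{1,2}(\Omega)$ is maximised by the torsion function, with maximum value $\tau_\alpha(\Omega)$, and you correctly identify why a naive Schwarz symmetrisation breaks down (the trace term is not controlled by equimeasurability). Your proposed remedy — extend by zero, pass to an $SBV$-type free-discontinuity functional in which the Robin term becomes a surface energy on the jump set, then rearrange — is indeed the strategy of Bucur and Giacomini. But what you have written is a research programme, not a proof: you explicitly label the decisive step (that the relaxation has the same supremum and that rearrangement decreases both the gradient energy and the jump/trace contribution simultaneously) as ``the main technical difficulty'' and offer no argument for it. Concretely, you would still need to (i) write down the relaxed functional precisely (the jump penalty must involve both one-sided traces $v^+$ and $v^-$, with the correct weight in $\alpha$), (ii) prove that the relaxed supremum over $SBV(\Real^d)$ under a volume constraint on $\{v>0\}$ coincides with $\sup_\Omega\tau_\alpha(\Omega)$, (iii) show that the radial decreasing rearrangement of an $SBV$ competitor does not increase the relaxed energy — this requires more than the isoperimetric inequality on superlevel sets, since the jump set need not be a level set — and (iv) establish attainment and characterise the maximiser as the ball. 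None of these is routine, and items (ii)–(iii) are precisely where the substance of the Bucur--Giacomini paper lies. As it stands the proposal correctly names the right approach but leaves the actual proof entirely open.
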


The classical Saint-Venant inequality is a special case of~\eqref{Saint} 
when the Robin boundary condition in~\eqref{capacity}\
is replaced by the Dirichlet one ($\alpha=\infty$).
In this case, we denote the torsional rigidity by~$\tau_D(\Omega)$.

The aim of the present paper is to investigate what  happens when $\alpha$ is negative. Indeed, as far as the case $\alpha<0$ is concerned, up to our knowledge, there exists no global optimality result under measure constraint in all dimensions. 
In this setting  by using a shape derivative approach, in~\cite{BW-shape}, Bandle and Wagner proved that the ball is a local minimiser
for $\tau_\alpha(\Omega)$
if $|\alpha|$ is small enough,
among all domains of fixed volume and in any dimension. 
Thereafter, in~\cite{BW-sign}, 
Bandle and Wagner (see also their book~\cite[Sec.~13]{Bandle-Wagner})
showed the global optimality of the disk in the planar case if $|\alpha|$ is small enough. The smallness depends on the geometry of~$\Omega$ and more specifically,
it is quantified by the Steklov eigenvalue problem
\begin{equation}\label{Steklov} 
\left\{
\begin{aligned}
  -\Delta \phi &=0 
  && \mbox{in} \quad \Omega \,, \\
  \frac{\partial \phi}{\partial n} &= \sigma \;\! \phi 
  && \mbox{on} \quad \partial\Omega \,,
\end{aligned}  
\right.
\end{equation}
which is known to admit an infinite sequence of discrete eigenvalues
(see \cite[Sec.~5]{Henrot2} or
\cite[Sec.~7]{Levitin-Mangoubi-Polterovich})
$$
  0 = \sigma_0(\Omega) 
  < \sigma_1(\Omega)
  \leq \sigma_2(\Omega)
  \leq \dots \to +\infty
  \,.
$$
More precisely, the global optimisation result 
proved in~\cite{BW-sign} reads as follows.%
\footnote{Actually the theorem is stated in~\cite{BW-sign} 
without requiring that~$\Omega$ is simply connected,
however, this hypothesis seems necessary 
for the proof given there
(based on Proposition~\ref{Prop.Payne} below).}
\begin{Theorem}[Bandle and Wagner~\cite{BW-sign}]\label{Thm.Bandle}
Let $d = 2$ and assume that~$\Omega$ is simply connected.
If $\alpha \in (-\sigma_1(\Omega),0)$, then
\begin{equation}\label{Bandle}
  \min_{|\Omega|=c} \tau_\alpha(\Omega) = \tau_\alpha(\Omega^*)
  \,,
\end{equation}
where $\Omega^*$ is the disk of the same area as~$\Omega$
fixed by a given positive constant~$c$. 
\end{Theorem}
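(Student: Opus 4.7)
The natural strategy is to exploit the Riemann mapping theorem, available for simply connected planar domains, together with the energy characterisation of the torsional rigidity. Under the hypothesis $\alpha\in(-\sigma_1(\Omega),0)$, the number $-\alpha$ avoids the Steklov spectrum, so the Robin--Laplacian is invertible and the torsion function $u$ is well defined; integrating by parts gives $\tau_\alpha(\Omega)=\int_\Omega u\,\der x=\int_\Omega|\nabla u|^2\,\der x+\alpha\int_{\partial\Omega}u^2\,\der\sigma$, and $u$ is critical for the energy functional
$$
F[v]:=\int_\Omega|\nabla v|^2\,\der x+\alpha\int_{\partial\Omega}v^2\,\der\sigma-2\int_\Omega v\,\der x,
$$
with $F[u]=-\tau_\alpha(\Omega)$. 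Exhibiting a single competitor $v\in H^1(\Omega)$ with $F[v]\leq -\tau_\alpha(\Omega^*)$ therefore suffices to conclude.

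I would build such a competitor by conformal transplantation. The Riemann mapping theorem furnishes a conformal bijection $g\colon\Omega^*\to\Omega$, where $\Omega^*$ is the disk of radius $R$ with $|\Omega^*|=|\Omega|$; let $U$ be the explicit, radially monotone torsion function of $\Omega^*$ and set $v:=U\circ g^{-1}$ on $\Omega$. Three transformations follow from the conformal structure: the Dirichlet energy is conformally invariant in dimension two, so $\int_\Omega|\nabla v|^2\,\der x=\int_{\Omega^*}|\nabla U|^2\,\der y$; the change of variables yields $\int_\Omega v\,\der x=\int_{\Omega^*}U\,|g'|^2\,\der y$; and since $U$ is constant on $\partial\Omega^*$ while $\int_{\partial\Omega^*}|g'|\,\der\mathcal{H}^1=L(\Omega)$, the boundary integral becomes $\int_{\partial\Omega}v^2\,\der\sigma=U(R)^2\,L(\Omega)$.

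Plugging these into $F[v]$ and comparing with $F[U]=-\tau_\alpha(\Omega^*)$ reduces the question to a combined inequality involving $|g'|^2$. The classical isoperimetric inequality $L(\Omega)\geq L(\Omega^*)$, together with $\alpha<0$, controls the boundary contribution, while the remaining comparison — relating $\int_{\Omega^*}U\,|g'|^2$ to $\int_{\Omega^*}U$ — must exploit the special structure of the conformal factor (harmonicity of $\log|g'|$, hence subharmonicity of $|g'|^2$). This is the content of the Payne--type statement (Proposition~\ref{Prop.Payne} below) on which the footnote says the argument relies, and the Steklov bound $-\alpha<\sigma_1(\Omega)$ is what makes the combined inequality tight. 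The main obstacle is precisely this coupling: neither the boundary nor the volume comparison individually goes the right way (a Chebyshev--type rearrangement suggests $\int_{\Omega^*}U\,|g'|^2$ moves \emph{against} us), so one needs a sharp inequality balancing them, and the role of the Steklov constant is to pinpoint the regime where they combine favourably. Rigidity in the isoperimetric inequality and in the Payne--type estimate then forces $\Omega$ to be a disk in the equality case.
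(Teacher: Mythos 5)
There is a genuine gap at the very first step of your argument, and it is precisely the difficulty the paper is designed to circumvent. You set up the energy functional
$$
F[v]=\int_\Omega|\nabla v|^2\,\der x+\alpha\int_{\partial\Omega}v^2\,\der\sigma-2\int_\Omega v\,\der x,\qquad F[u]=-\tau_\alpha(\Omega),
$$
and assert that exhibiting one competitor $v\in W^{1,2}(\Omega)$ with $F[v]\leq-\tau_\alpha(\Omega^*)$ suffices. For that to yield $F[u]\leq-\tau_\alpha(\Omega^*)$, you need $u$ to minimise $F$. This is true for $\alpha\geq0$ but \emph{false} for $\alpha<0$: the second variation of $F$ at $u$ is the quadratic form $\delta_\alpha^\Omega$, whose bottom is the first Robin eigenvalue $\lambda_1(\alpha)$, and $\lambda_1(\alpha)<0$ as soon as $\alpha<0$ (indeed $\lambda_1(0)=0$ and $\lambda_1$ is strictly decreasing in $\alpha$). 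Hence $F$ is unbounded below on $W^{1,2}(\Omega)$ and $u$ is only a saddle critical point; a test function can tell you nothing about the sign of $F[u]-F[v]$. This is exactly the ``absence of a suitable variational characterisation of $\tau_\alpha(\Omega)$ whenever $\alpha$ is negative'' flagged in Section~\ref{Sec.operator}.

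The actual proof replaces the missing variational principle by Lemma~\ref{Lem.core}, proved by writing $u=u_D+w$ with $u_D$ the Dirichlet torsion function and $w$ harmonic, expanding $w$ in Steklov eigenfunctions and using $-\alpha<\sigma_1(\Omega)$ to control the series; this gives the crude lower bound $\tau_\alpha(\Omega)\geq\tau_D(\Omega)+|\Omega|^2/(\alpha|\partial\Omega|)$. The Dirichlet torsional rigidity $\tau_D(\Omega)$ is then estimated from below by the Dirichlet--Neumann annulus torsion via Payne--Weinberger's method of parallel coordinates (level sets of the distance to $\partial\Omega$), not by conformal transplantation, and the remaining inequality is checked on explicit radial formulae. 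Your conformal-transplantation competitor and the heuristics about $|g'|^2$ and Chebyshev ordering address a different obstacle; even if those inequalities could be made to work, they cannot close the argument because the minimax principle into which they would be inserted is simply not available for negative~$\alpha$.
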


Let us stress that the authors in \cite{BW-sign} do not study the case of  perimeter constraint and, moreover, they state as an open problem the extension of the previous result to higher dimensions. 
The objective of this paper is precisely to prove
 Theorem~\ref{Thm.Bandle} in higher dimensions 
and/or under the alternative isoperimetric constraint, 
the former solving the open problem raised in \cite{BW-sign}.

Our motivation to study the perimeter constraint comes from the following Robin eigenvalue problem
\begin{equation}\label{eigenvalue}
\left\{
\begin{aligned}
  -\Delta u &=\lambda u  
  && \mbox{in} \quad \Omega \,, \\
  \frac{\partial u}{\partial n} + \alpha \;\! u &=0  
  && \mbox{on} \quad \partial\Omega \,,
\end{aligned}  
\right.
\end{equation}
where $\lambda$ is a real number and $\alpha<0.$
Indeed, for the lowest eigenvalue of \eqref{eigenvalue}  it is known that the ball equally stops to be the optimal set 
under the isochoric constraint if $|\alpha|$ is large (see~\cite{FK7}), while this transition does not take place
if the isoperimetric constraint is assumed
(see  \cite{AFK,BFNT,Vikulova_2022}). 
In this framework, our first result reads as follows.

\begin{Theorem}\label{Thm.2D}
Let $d = 2$ and assume that~$\Omega$ is simply connected.
If $\alpha \in (-\sigma_1(\Omega),0)$, then
\begin{equation} 
  \min_{|\partial\Omega|=c} \tau_\alpha(\Omega) = \tau_\alpha(\Omega^*)
  \,,
\end{equation}
where $\Omega^*$ is the disk of the same perimeter as~$\Omega$ 
fixed by a given positive constant~$c$. 
\end{Theorem}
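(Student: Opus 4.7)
The plan is to reduce Theorem~\ref{Thm.2D} to Theorem~\ref{Thm.Bandle} via an intermediate ball. Given a simply connected planar $\Omega$ with $|\partial\Omega|=c$, I would introduce both the disk $\Omega^* = B_R$ of equal perimeter ($R = c/(2\pi)$) and the auxiliary disk $B_r$ of equal \emph{area} $|B_r| = |\Omega|$. The classical isoperimetric inequality in $\Real^2$ forces $r \leq R$, and it suffices to establish the chain
\[
\tau_\alpha(\Omega) \;\geq\; \tau_\alpha(B_r) \;\geq\; \tau_\alpha(B_R) \;=\; \tau_\alpha(\Omega^*) .
\]
The first inequality is Theorem~\ref{Thm.Bandle} applied to $\Omega$, which is legitimate under the hypothesis $\alpha \in (-\sigma_1(\Omega),0)$.

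\textbf{Disk-to-disk monotonicity.} The second inequality reduces to a one-variable calculation. Solving \eqref{capacity} explicitly on $B_\rho \subset \Real^2$ yields the radial torsion function $u(y) = \frac{\rho^2-|y|^2}{4} + \frac{\rho}{2\alpha}$, whence
\[
\tau_\alpha(B_\rho) \;=\; \frac{\pi \rho^4}{8} + \frac{\pi \rho^3}{2\alpha},
\qquad
\frac{\der}{\der\rho}\,\tau_\alpha(B_\rho) \;=\; \frac{\pi\rho^2}{2}\!\left(\rho - \frac{3}{|\alpha|}\right) .
\]
Thus $\rho \mapsto \tau_\alpha(B_\rho)$ is strictly decreasing on the interval $(0, 3/|\alpha|)$, and the inequality $\tau_\alpha(B_r) \geq \tau_\alpha(B_R)$ follows as soon as we know $R < 3/|\alpha|$.

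\textbf{Activating the Steklov bound via Weinstock.} The bound on $R$ is precisely where the planar, simply connected hypothesis is used, through Weinstock's classical inequality: for every simply connected $\Omega \subset \Real^2$ one has $\sigma_1(\Omega)\,|\partial\Omega| \leq 2\pi$, with equality only on disks. Since $|\partial\Omega| = 2\pi R$, this reads $\sigma_1(\Omega) \leq 1/R$, so the hypothesis $|\alpha| < \sigma_1(\Omega)$ yields $R < 1/|\alpha|$, and in particular $R < 3/|\alpha|$. Chaining the three inequalities in the first display then delivers Theorem~\ref{Thm.2D}, with equality throughout forcing $\Omega = \Omega^*$.

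The key step is the identification of Weinstock's inequality as the correct sharpening of the Steklov hypothesis in this geometry: without the bound $\sigma_1(\Omega) \leq 1/R$, the disk monotonicity cannot be activated and the reduction collapses. This is also what naturally ties the argument to $d = 2$ and to simply connected $\Omega$, matching exactly the scope of Theorem~\ref{Thm.Bandle}.
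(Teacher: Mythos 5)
Your proof is correct, but it takes a genuinely different and noticeably shorter route than the paper's. The paper proves Theorem~\ref{Thm.2D} from scratch by combining Lemma~\ref{Lem.core} (the Bandle--Wagner lower bound $\tau_\alpha(\Omega)\geq\tau_D(\Omega)+|\Omega|^2/(\alpha|\partial\Omega|)$) with the Payne--Weinberger annulus bound (Proposition~\ref{Prop.Payne}), then plugging in the explicit formula for $\tau_{DN}(A_{R_1,R_2})$ in $d=2$ and reducing matters to a one-variable inequality in $t=R_1/R_2$; the requirement $\alpha\geq -2/R$ that falls out of this is then absorbed by Weinstock. You instead treat Theorem~\ref{Thm.Bandle} as a black box to control $\tau_\alpha(\Omega)$ against the disk $B_r$ of equal area, and then pass from $B_r$ to the disk $B_R$ of equal perimeter by the strict monotonicity of $\rho\mapsto\tau_\alpha(B_\rho)=\frac{\pi\rho^4}{8}+\frac{\pi\rho^3}{2\alpha}$ on $(0,3/|\alpha|)$, with $r\leq R$ from the isoperimetric inequality and $R<1/|\alpha|$ from Weinstock ensuring both radii lie in the monotone regime. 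This reduction is clean and avoids the annulus formula altogether; it also scales to higher dimensions (where $\tau_\alpha(B_\rho)$ is decreasing on $(0,(d+1)/|\alpha|)$), so the isoperimetric half of Theorem~\ref{Thm.all} could in principle be deduced from its isochoric half by the same device. What the paper's direct argument buys, beyond being self-contained, is the explicit quantitative intermediate inequality~\eqref{rhs}, which the authors reuse as a template in higher dimensions; your argument, by contrast, leans on Theorem~\ref{Thm.Bandle} and is therefore only as elementary as that result's proof. One small caution: your final sentence asserts that equality forces $\Omega=\Omega^*$, and while the chain $r=R\Rightarrow\Omega$ is a disk does follow from the isoperimetric equality case, you do not actually need (and should not silently invoke) the rigidity statement of Theorem~\ref{Thm.Bandle}, which the paper does not record.
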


As regards the higher-dimensional case, 
our main result, which generalises Theorems~\ref{Thm.Bandle} and~\ref{Thm.2D}, is the following.

\begin{Theorem}\label{Thm.all}
Let $d \geq 3$ and assume that $\Omega$ is convex. 
If $\alpha \in (-\sigma_1(\Omega),0)$,
then
\begin{equation} 
  \min_{|\Omega|=c} \tau_\alpha(\Omega) = \tau_\alpha(\Omega^*)
  \qquad
  \mbox{(respectively, }
  \min_{|\partial\Omega|=c} \tau_\alpha(\Omega) = \tau_\alpha(\Omega^*)
  \mbox{)}
  \,,
\end{equation}
where $\Omega^*$ is the ball of the same volume 
(respectively, perimeter) as~$\Omega$ 
fixed by a given positive constant~$c$. 
If $d=3$, the convexity can be replaced by the hypothesis 
that~$\Omega$ is axiconvex 
with~$\partial\Omega$ being homeomorphic to the sphere.
\end{Theorem}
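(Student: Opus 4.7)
The plan is to adapt to $d\ge 3$ the two-dimensional strategy of Bandle--Wagner that underlies Theorems~\ref{Thm.Bandle} and~\ref{Thm.2D}, whose engine is Proposition~\ref{Prop.Payne}. The planar argument rests on two features of $\Omega$: simple connectedness (which feeds the conformal mechanism behind Proposition~\ref{Prop.Payne}) and the planar isoperimetric inequality. In $d\ge 3$ I would replace both by convexity, while in the three-dimensional axiconvex case I would exploit the axial structure to fall back on the planar result.

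First I would record that the smallness $\alpha\in(-\sigma_1(\Omega),0)$ is exactly what makes the bilinear form
$$a_\alpha(v,v) := \int_\Omega|\nabla v|^2\,\der x + \alpha\int_{\partial\Omega} v^2\,\der\mathcal{H}^{d-1}$$
non-degenerate on $H^1(\Omega)$: its kernel is trivial because $-\alpha$ lies strictly below the first non-trivial Steklov eigenvalue. Consequently the torsion function $u_\alpha$ of~\eqref{capacity} is uniquely determined and $\tau_\alpha(\Omega)=a_\alpha(u_\alpha,u_\alpha)=\int_\Omega u_\alpha$. Evaluating the functional on a radial transplantation $w$ of the torsion function $u_\alpha^*$ of $\Omega^*$ (rescaled so that either $|\Omega|=|\Omega^*|$ or $|\partial\Omega|=|\partial\Omega^*|$) produces an identity of the form
$$\tau_\alpha(\Omega)-\tau_\alpha(\Omega^*) \;=\; \mathcal{I}(\Omega) + \alpha\,\mathcal{B}(\Omega),$$
in which $\mathcal{I}(\Omega)\ge 0$ with equality only on balls (by a Saint-Venant-type interior comparison), while $\mathcal{B}(\Omega)$ is a boundary remainder whose sign is unfavourable for $\alpha<0$.

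The analytic heart of the proof is then a higher-dimensional replacement for Proposition~\ref{Prop.Payne} guaranteeing $\alpha\,\mathcal{B}(\Omega)\ge -\mathcal{I}(\Omega)$. I would assemble it from two ingredients: the variational definition of $\sigma_1$,
$$\sigma_1(\Omega)\int_{\partial\Omega}\varphi^2\,\der\mathcal{H}^{d-1} \;\le\; \int_\Omega|\nabla\varphi|^2\,\der x, \qquad \varphi\perp 1 \text{ on } \partial\Omega,$$
combined with a Brock/Weinstock-type isoperimetric bound $\sigma_1(\Omega)\le\sigma_1(\Omega^*)$ holding, under the same constraint fixed for the torsional rigidity, on convex sets. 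Convexity is needed both for the sharp Steklov isoperimetric inequality in general dimension and for a Reilly-type trace identity allowing $\mathcal{B}(\Omega)$ to be recast as a Steklov quadratic form in the Steklov-admissible part of $u_\alpha$. For the axiconvex case in $d=3$ with $\partial\Omega$ homeomorphic to $\Sphere^2$, a Steiner-type symmetrisation about the convexity axis, which preserves axiconvexity and the prescribed constraint, reduces the situation to the rotationally symmetric setting, where Proposition~\ref{Prop.Payne} applies to the simply connected meridian section.

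The principal obstacle, already present in the planar proof, is to absorb the wrong-signed boundary contribution $\alpha\,\mathcal{B}(\Omega)$ into the interior gain $\mathcal{I}(\Omega)$ \emph{sharply}: only the strict separation $|\alpha|<\sigma_1(\Omega)$ provides enough room, and the constants must align exactly so that equality forces $\Omega$ to be a ball. This calibration is what explains why convexity cannot be weakened to mere simple connectedness in $d\ge 3$, and it is also the reason the volume and perimeter variants of the theorem demand their respective versions of the Steklov isoperimetric inequality.
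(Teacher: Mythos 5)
The central gap is that for $\alpha<0$ the Robin torsional rigidity $\tau_\alpha(\Omega)$ has no variational characterisation, so the step ``evaluating the functional on a radial transplantation $w$ of $u_\alpha^*$ produces an identity of the form $\tau_\alpha(\Omega)-\tau_\alpha(\Omega^*)=\mathcal{I}(\Omega)+\alpha\,\mathcal{B}(\Omega)$'' does not hold up. Indeed, writing $E_\alpha^\Omega[w]=a_\alpha(w,w)-2\int_\Omega w$, the weak formulation $a_\alpha(u_\alpha,v)=\int_\Omega v$ gives
\begin{equation*}
E_\alpha^\Omega[w]=a_\alpha(w-u_\alpha,\,w-u_\alpha)-\tau_\alpha(\Omega),
\end{equation*}
and the quadratic form $a_\alpha$ is \emph{indefinite} for $\alpha<0$ (take $v\equiv 1$). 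Hence $u_\alpha$ is a saddle point, not a minimiser, and inserting a test function $w$ yields neither a usable inequality nor the decomposition you assert; non-degeneracy of $a_\alpha$ for $\alpha\in(-\sigma_1(\Omega),0)$ is not the same as coercivity. This is exactly the obstruction the paper circumvents with Lemma~\ref{Lem.core} (Bandle--Wagner): $\tau_\alpha(\Omega)\geq\tau_D(\Omega)+\frac{|\Omega|^2}{\alpha|\partial\Omega|}$, obtained by writing $u_\alpha$ as the Dirichlet torsion function plus a harmonic correction expanded in Steklov eigenfunctions. Without stating and using that lemma (or a genuine substitute, which a Reilly-type trace identity is not), no lower bound on $\tau_\alpha(\Omega)$ is produced.

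Once Lemma~\ref{Lem.core} is in place, the remaining work concerns only the Dirichlet torsion, where a variational principle does hold, and the higher-dimensional replacement for Proposition~\ref{Prop.Payne} that you correctly anticipate is Theorem~\ref{Prop.new}: $\tau_D(\Omega)\geq\tau_{DN}(A_{R_1,R_2})$ for the spherical shell of matching volume and outer perimeter. Its proof uses the method of parallel coordinates driven by the distance function, with convexity entering via the Aleksandrov--Fenchel inequality~\eqref{iso_W_2} for quermassintegrals of the level sets $E_t$; neither Reilly's formula nor a Steklov quadratic form appears. For axiconvex $\Omega$ in $d=3$ the paper invokes Vikulova's parallel-coordinate extension (Proposition~\ref{Prop.3D}), not a Steiner symmetrisation about the axis. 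Finally, the combined bound must be checked to be non-negative, which requires the explicit algebraic inequalities in $t=R_1/R_2$ (Lemmas~\ref{Lem.horror} and~\ref{Lem.key}) together with the condition $\alpha\geq-2/R$ (resp.\ $\geq -1/R$); only at that last stage do the Weinstock and Brock inequalities enter, as the step that guarantees $-\sigma_1(\Omega)\geq-1/R$ so that the hypothesis $\alpha>-\sigma_1(\Omega)$ suffices.
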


We recall  that a three-dimensional Euclidean set is called axiconvex 
if it is rotationally invariant around an axis 
and its intersection with any plane orthogonal to the symmetry
axis is either a disk or empty set 
\cite{Dalphin-Henrot-Masnou-Takahashi_2016}.
 
The proof of Theorem~\ref{Thm.all} 
is based on Theorem~\ref{Prop.new} below
stating that the Dirichlet torsional rigidity 
of any convex domain in all dimensions $d \geq 2$ is bounded from below
by the Dirichlet--Neumann torsional rigidity of a spherical shell
of the same volume. 
To the best of our knowledge, this result is new for $d \geq 4$.
(The case $d=3$ can be deduced from~\cite{Vikulova_2022}.)
Our proof is based on the extension of the method of parallel coordinates
following the ideas of~\cite{BFNT}.

The organisation of this paper is as follows.
In Section~\ref{Sec.pre}, we specify how to frame~\eqref{capacity}
operator-theoretically, 
present explicit solutions for balls,
state the main ingredient in our proofs
(replacing the lack of a suitable variational characterisation
if~$\alpha$ is negative)
and recall the Steiner formula for convex sets. 
The proofs of Theorems~\ref{Thm.2D} and~\ref{Thm.all}
are given in Sections~\ref{Sec.2D} and~\ref{Sec.all}.

\section{Preliminary results}\label{Sec.pre}
%

\subsection{Notations and basic facts} 

Throughout this paper, we assume that~$\Omega$
is a non-empty bounded Lipschitz domain in~$\Real^d$ with $d \geq 2$
(a domain is an open connected set).
The $d$-dimensional Lebesgue measure of~$\Omega$ 
and the $(d-1)$-dimensional Hausdorff measure of the boundary~$\partial\Omega$
are denoted by~$|\Omega|$ and~$|\partial\Omega|$, respectively. 
Moreover, we denote by $B_R$   the open ball of~$\Real^d$ with  radius~$R>0$
centred at the origin, and the spherical shell by $A_{R_1,R_2} := B_{R_2} \setminus \overline{B_{R_1}}$.
We also recall that the inradius of a set $\Omega \subset \mathbb R^d$ is defined by
\begin{equation} \label{dist}
 r_{\Omega} := \sup_{x \in \Omega} \rho(x),
 \qquad \mbox{where} \qquad
 \rho(x) := \inf_{y \in \partial \Omega} |x-y|
\end{equation}
is the  distance function from the boundary of $\Omega$.

In what follows we recall some properties of convex bodies, which will be useful in the sequel (see as reference \cite{schneider}). 
Let $\emptyset\neq K\subseteq\mathbb{R}^d$ be  an open, 
pre-compact and convex set. 
We define the \emph{outer parallel body} of~$K$ 
at distance $\rho \geq 0$ as the  Minkowski sum
$$ 
K+\rho B_1=\{ x+\rho y\in\mathbb{R}^d:\  x\in K,\;y\in B_1 \}.
$$ 
The Steiner formula asserts that
\begin{equation}\label{general_steiner}
|K+\rho B_1|=\sum_{i=0}^{d}\binom{d}{i} \, W_i(K) \, \rho^i
\,, 
\end{equation}
where the coefficients $W_i(\Omega)$ are usually called \emph{quermassintegrals}.
Consequently,
\begin{equation}\label{second W}
\lim\limits_{\rho \to 0} 
\dfrac{|\partial(K+\rho B_1)|-|\partial K|}{\rho}
=d(d-1) W_2(K)
\,.
\end{equation}
For smooth~$K$, one has $W_2(K) = \frac{1}{d} \int_{\partial K} H$,
where $H := \frac{1}{d-1} (\kappa_1 + \dots + \kappa_{d-1})$ 
denotes the mean curvature of~$\partial K$, 
with the convention that the principal curvatures 
$\kappa_1,\dots,\kappa_{d-1}$ are non-negative if~$K$ is convex.

The Aleksandrov--Fenchel inequalities state 
\begin{equation}\label{aleksandrov-fenchel}
	\left(\dfrac{W_j(K)}{|B_1|}\right)^{\frac{1}{d-j}}
	\geq \left(\dfrac{W_i(K)}{|B_1|}\right)^{\frac{1}{d-i}},
	\qquad\mbox{for}\qquad
	0\leq i<j<n \,,
\end{equation}
with equality if, and only if, $K$~is a ball.  
In the case  $i=0$ and $j=1$, 
we obtain the classical isoperimetric inequality
\begin{equation*}
|\partial K|^{\frac{d}{d-1}}
\geq d^{\frac{d}{d-1}} \, |B_1|^{\frac{1}{d-1}} \, |K|
\,. 
\end{equation*}
When $i=1$ and $j=2$,  \eqref{aleksandrov-fenchel} reads
\begin{equation}\label{iso_W_2}
W_2(K)\geq 
d^{-\frac{d-2}{d-1}} \, |B_1|^{\frac{1}{d-1}} \, |\partial K|^{\frac{d-2}{d-1}}
\,.
\end{equation}

For every $t \in[0,r_{\Omega}]$, we consider the following super-level sets of the distance function 
\begin{equation*}
\Omega_{t}:=\{ x\in \Omega:\ \rho(x)>t  \} 
\,,
\end{equation*}
we recall the following two lemmata, whose proofs can be found, for instance, 
in~\cite{BNT} and~\cite{BFNT}.
\begin{Lemma} 
	Let $\Omega$ be a bounded, convex, open set in $\mathbb{R}^d$. Then, for almost every $t\in(0,r_{\Omega} )$, we have
	\begin{equation*}
		-\dfrac{\der}{\der t} 
		|\partial\Omega_{t}|
		\geq d(d-1) \, W_2(\Omega_{t})
		\,,
	\end{equation*}
	with equality holding if $\Omega$ is a ball.
\end{Lemma}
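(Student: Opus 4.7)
The plan is to compare the inner parallel body $\Omega_{t}$ to its own outer parallel bodies and exploit the Steiner-type identity~\eqref{second W}. The geometric heart of the argument is the inclusion
\begin{equation*}
  \Omega_{t} + s B_{1} \;\subseteq\; \Omega_{t-s}
  \qquad \mbox{for every} \qquad 0 \leq s \leq t \leq r_{\Omega} \,,
\end{equation*}
which follows at once from the $1$-Lipschitz character of the distance function $\rho$: for $y \in \Omega_{t}$ and $z \in B_{1}$, one has $\rho(y + s z) \geq \rho(y) - s |z| > t - s$.

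Since $\Omega$ is convex, $\rho$ is concave on $\Omega$, so each super-level set $\Omega_{t}$ is itself open, bounded and convex; in particular, the Steiner formula~\eqref{general_steiner} and the right-derivative identity~\eqref{second W} apply to $\Omega_{t}$. Next, I would invoke the monotonicity of perimeter under inclusion for convex bodies, which combined with the displayed inclusion yields $|\partial(\Omega_{t} + s B_{1})| \leq |\partial \Omega_{t-s}|$. Subtracting $|\partial \Omega_{t}|$ and dividing by $s > 0$,
\begin{equation*}
  \frac{|\partial \Omega_{t-s}| - |\partial \Omega_{t}|}{s} \;\geq\; \frac{|\partial(\Omega_{t} + s B_{1})| - |\partial \Omega_{t}|}{s} \,.
\end{equation*}
The right-hand side tends to $d(d-1) \, W_{2}(\Omega_{t})$ as $s \to 0^{+}$ by~\eqref{second W} applied to the convex body $\Omega_{t}$. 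Because $t \mapsto |\partial \Omega_{t}|$ is monotone non-increasing (by perimeter monotonicity again, using $\Omega_{t'} \subseteq \Omega_{t}$ for $t' \geq t$), it is differentiable almost everywhere, and at each such point the left-hand side converges to $-\frac{\der}{\der t} |\partial \Omega_{t}|$, giving the claimed inequality.

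The equality case for $\Omega = B_{R}$ is immediate: then $\Omega_{t} = B_{R-t}$ and $\Omega_{t} + s B_{1} = B_{R-t+s} = \Omega_{t-s}$, so the set inclusion degenerates to an identity and both sides reduce to $d(d-1) |B_{1}| (R-t)^{d-2}$. The main delicate point is the perimeter inequality $|\partial(\Omega_{t} + s B_{1})| \leq |\partial \Omega_{t-s}|$, which is generically strict (\eg, for polytopes, whose inner parallel bodies $\Omega_{t-s}$ retain sharp corners while the Minkowski sum $\Omega_{t} + s B_{1}$ rounds them); the monotonicity of perimeter for nested convex bodies is exactly what keeps the inequality intact under the passage to the limit, and the a.e.~differentiability of $|\partial \Omega_{t}|$ then comes for free from its monotonicity.
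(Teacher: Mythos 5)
Your argument is correct, and it is the standard one used in the literature the paper cites for this lemma (the paper itself does not reproduce the proof but defers to~\cite{BNT} and~\cite{BFNT}). The chain of ideas — the $1$-Lipschitz estimate giving the inclusion $\Omega_t + s B_1 \subseteq \Omega_{t-s}$, monotonicity of the surface measure under inclusion of convex bodies, the Steiner coefficient identity~\eqref{second W}, and Lebesgue differentiation of the monotone map $t \mapsto |\partial\Omega_t|$ — is exactly the route taken in those references.

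One small point worth making explicit: to conclude $y + sz \in \Omega_{t-s}$ you need not only $\rho(y + sz) > t - s$ but also $y + sz \in \Omega$ (since $\Omega_{t-s}$ is by definition a subset of $\Omega$, and the distance function $\rho$ as written is also positive outside $\overline{\Omega}$). This holds because $|sz| < s \le t < \rho(y)$, so $y + sz$ lies in the open ball $B(y, \rho(y)) \subseteq \Omega$; it is worth stating rather than leaving implicit. With that caveat addressed, the proof is complete and matches the referenced argument in substance.
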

\begin{Lemma}
\label{lem_der_per_e}
In addition,
	let $f:[0,+\infty)\to[0,+\infty) $  be a non-decreasing $C^1$ function.   
    Define  
	\begin{equation*}
	u(x):=f(\rho(x))
	\qquad \mbox{and} \qquad 
	E_{t}:=\{x\in \Omega\;:\; u(x)>t\}. 
	\end{equation*}
	Then,
	\begin{equation}\label{derivata_composta_super1}
		-\dfrac{\der}{\der t}
		|\partial E_{t}|
		\geq d (d-1) \, \dfrac{W_2(E_{t})}{|D u|_{u=t}}
		\,.
	\end{equation}
\end{Lemma}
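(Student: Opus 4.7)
The plan is to reduce the claim to the preceding Lemma via the chain rule and a change of variables. Set $s(t) := \sup\{s \geq 0 : f(s) \leq t\}$, which is the generalised inverse of the non-decreasing $C^1$ function~$f$. Since $f$ is monotone, for every $t$ in the range of $f$ one has the identification
\begin{equation*}
E_t = \{x \in \Omega : f(\rho(x)) > t\} = \{x \in \Omega : \rho(x) > s(t)\} = \Omega_{s(t)},
\end{equation*}
so $|\partial E_t| = |\partial \Omega_{s(t)}|$ and $W_2(E_t) = W_2(\Omega_{s(t)})$. Thus the claim is equivalent to an inequality for $|\partial \Omega_s|$ parameterised through~$s(t)$.

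Next I would differentiate in $t$. At almost every $t$ with $f'(s(t)) > 0$, the generalised inverse $s(\cdot)$ is differentiable with $s'(t) = 1/f'(s(t))$, so by the chain rule and the preceding Lemma,
\begin{equation*}
-\frac{\der}{\der t} |\partial E_t|
= -\frac{\der}{\der s}|\partial \Omega_s|\Big|_{s=s(t)} \cdot \frac{1}{f'(s(t))}
\geq \frac{d(d-1) \, W_2(\Omega_{s(t)})}{f'(s(t))}
= \frac{d(d-1) \, W_2(E_t)}{f'(s(t))}.
\end{equation*}
It then remains to identify $f'(s(t))$ with $|Du|_{u=t}$. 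Because~$\Omega$ is convex, the distance function~$\rho$ is semiconcave and satisfies $|D\rho| = 1$ almost everywhere in~$\Omega$; by the standard chain rule $|Du(x)| = f'(\rho(x))$ for a.e.\ $x \in \Omega$, which on the level set $\{u = t\}$ equals $f'(s(t))$. This justifies the notation $|Du|_{u=t}$ and completes the argument.

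The main obstacle I anticipate is making the chain rule rigorous at the exceptional values of $t$ — namely the plateaus of $f$, on which $f'$ vanishes, and the negligible set of $s$ on which the previous Lemma may fail. The plateau issue is handled by working only with $t$ in the image of $f$ at which $f'(s(t)) > 0$, which is a full-measure subset of $f([0,+\infty))$; outside this subset the derivative $\der |\partial E_t|/\der t$ vanishes identically (since $t \mapsto E_t$ is locally constant there), and both sides of the claimed inequality are zero. The second issue is resolved because $s \mapsto |\partial \Omega_s|$ is absolutely continuous on compact subintervals of $(0, r_\Omega)$ for convex~$\Omega$, so absolute continuity of $s(t)$ transports the a.e.\ inequality of the preceding Lemma to an a.e.\ inequality in~$t$.
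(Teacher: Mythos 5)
Your chain-rule reduction to the preceding lemma is correct and is essentially the argument used in the sources the paper cites for this statement (the paper itself states the lemma without proof, deferring to \cite{BNT} and \cite{BFNT}). One small slip worth fixing: at plateau values of $f$, where $f'(s(t))=0$, the right-hand side of the claimed inequality is not zero but $+\infty$ (or undefined), since $|Du|_{u=t}=0$ appears in the denominator; the correct resolution is that such $t$ form a Lebesgue-null set in the range (the $C^1$ map $f$ sends the critical set $\{f'=0\}$ to a null set, by Sard or simply by Luzin (N) for absolutely continuous functions), so the inequality is to be read for almost every $t$, exactly as in the preceding lemma, and the plateau values are simply excised from consideration rather than checked directly.
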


\subsection{ Operator-theoretic framework and the key lemma
}\label{Sec.operator}
Let $-\Delta_\alpha^\Omega$ denote the Robin Laplacian, 
\ie~the self-adjoint operator in $\sii(\Omega)$
associated with the quadratic form 
\begin{equation}\label{Laplace}
  \delta_\alpha^\Omega[\psi] 
  := \|\nabla\psi\|_{\sii(\Omega)}^2 
  + \alpha \, \|\psi\|_{\sii(\partial\Omega)}^2
  \,, \qquad
  \dom(\delta_\alpha^\Omega) := W^{1,2}(\Omega)
  \,,
\end{equation}
where the boundary trace of~$\psi$ is denoted by the same symbol.
If~$\alpha$ is such that $0 \not\in \sigma(-\Delta_\alpha^\Omega)$,
there is a unique solution to~\eqref{capacity} given by 
\begin{equation}\label{solution} 
  u = (-\Delta_\alpha^\Omega)^{-1} 1
\end{equation}
This is true for all positive~$\alpha$.
If~$\alpha$ is non-positive, however, $0 \in \sigma(-\Delta_\alpha^\Omega)$
if, and only if, $\alpha = -\sigma_k(\Omega)$ 
for some $k \in \Nat := \{0,1,\dots\}$,
where $(\sigma_k(\Omega))_{k \in \Nat}$
is the sequence of Steklov eigenvalues
(repeated according to multiplicities)
of the problem~\eqref{Steklov}.
We recall that  
$
 \{\sigma_k(B_R)\}_{k\in\Nat}
 = \{\frac{k}{R}\}_{k\in\Nat}
$. 

\begin{Remark}
In~\cite{Bucur-Fragala_2019}, Bucur and Fragal\`{a} consider
an alternative ``torsional rigidity'' $\tilde{\tau}_\alpha(\Omega)$
defined variationally via
\begin{equation}\label{minimax} 
  \tilde{\tau}_\alpha(\Omega)^{-1} :=
  \inf_{\stackrel[\psi\not=0]{}{\psi\in W^{1,2}(\Omega)}} 
  \frac{\displaystyle
  \int_\Omega |\nabla\psi|^2 
  + \alpha \int_{\partial\Omega} |\psi|^2 }{\displaystyle
  \left(\int_\Omega |\psi|\right)^2} 
  \,.
\end{equation}
(For $\alpha=\infty$ the boundary term is disregarded
and the space of test functions is taken to be $W_0^{1,2}(\Omega)$.)
If $\alpha \in (0,\infty) \cup \{\infty\}$,
the two definitions coincide, \ie\ 
$\tilde{\tau}_\alpha(\Omega) = \tau_\alpha(\Omega)$.
In this case, the absolute value in the denominator can be removed
and~\eqref{capacity} represents the Euler--Lagrange equation
associated with~\eqref{minimax};
in fact, the solution to~\eqref{capacity} is necessarily positive. 
If~$\alpha$ is negative, however, 
$\tilde{\tau}_\alpha(\Omega) \not= \tau_\alpha(\Omega)$ in general,
for the solution to~\eqref{capacity} does not necessarily 
have a constant sign in~$\Omega$.
In this paper, we exclusively restrict to the physically motivated 
definition via~\eqref{capacity}.
We use~\eqref{minimax} only as a technical tool 
in Theorem~\ref{Prop.new} concerned with the Dirichlet case.
\end{Remark}
The boundary value problem~\eqref{capacity} can be interpreted as 
the Euler--Lagrange equation of the energy functional 
$E_\alpha^\Omega[\psi] := \delta_\alpha^\Omega[\psi] - 2 \int_\Omega \psi$
with $\dom(E_\alpha^\Omega) := W^{1,2}(\Omega)$.
Obviously, $E_\alpha^\Omega[u] = -\tau_\alpha(\Omega)$.
Because of the absence of a suitable variational characterisation 
of $\tau_\alpha(\Omega)$ whenever~$\alpha$ is negative,
the following estimate is a crucial ingredient in our proofs.
\begin{Lemma}[{Bandle and Wagner~\cite[Lem.~2]{BW-sign}}]\label{Lem.core}
If $\alpha \in (-\sigma_1(\Omega), 0)$, then
\begin{equation}\label{core} 
  \tau_\alpha(\Omega) \geq \tau_D(\Omega) 
  + \frac{|\Omega|^2}{\alpha |\partial\Omega|} 
  \,.
\end{equation}
The equality occurs for the ball.
\end{Lemma}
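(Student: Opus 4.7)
My plan is to establish the identity
\[
 \tau_\alpha(\Omega) - \tau_D(\Omega) = \delta_\alpha^\Omega[w],
 \qquad w := u - v,
\]
where~$u$ is the Robin torsion function and $v \in W_0^{1,2}(\Omega)$ is the Dirichlet torsion function (i.e.\ the unique solution of $-\Delta v = 1$ in~$\Omega$ with $v=0$ on $\partial\Omega$). The function $w \in W^{1,2}(\Omega)$ is harmonic in~$\Omega$, coincides with~$u$ on~$\partial\Omega$, and satisfies $\partial_n w + \alpha w = -\partial_n v$ there. Integrating by parts several times, using $\Delta w = 0$ and $v|_{\partial\Omega}=0$, I would compute
\[
 \int_\Omega w = -\int_\Omega w\,\Delta v = -\int_{\partial\Omega} w\,\partial_n v = \int_{\partial\Omega} w\,(\partial_n w + \alpha w) = \int_\Omega |\nabla w|^2 + \alpha\int_{\partial\Omega}w^2 = \delta_\alpha^\Omega[w],
\]
which is the advertised identity.

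Next, I would decompose $w$ into its boundary mean $c := |\partial\Omega|^{-1}\int_{\partial\Omega} w$ and the fluctuation $w - c$. Since $\int_{\partial\Omega}(w-c)=0$, the cross terms vanish and one finds $\delta_\alpha^\Omega[w] = \alpha c^2 \, |\partial\Omega| + \delta_\alpha^\Omega[w-c]$. The variational characterisation of the first non-trivial Steklov eigenvalue of~\eqref{Steklov} then yields $\int_\Omega |\nabla(w-c)|^2 \geq \sigma_1(\Omega)\int_{\partial\Omega}(w-c)^2$, whence
\[
 \delta_\alpha^\Omega[w - c] \geq \bigl(\alpha+\sigma_1(\Omega)\bigr) \int_{\partial\Omega}(w-c)^2 \geq 0,
\]
precisely because of the hypothesis $\alpha > -\sigma_1(\Omega)$.

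The constant~$c$ is pinned down by integrating $-\Delta u = 1$ over~$\Omega$ and using the Robin boundary condition $\partial_n u = -\alpha u$ together with $w|_{\partial\Omega} = u|_{\partial\Omega}$: this yields $|\Omega| = \alpha \int_{\partial\Omega} u = \alpha \, c \, |\partial\Omega|$, so that $c = |\Omega|/(\alpha|\partial\Omega|)$. Combining this with the lower bound on $\delta_\alpha^\Omega[w]$ gives $\delta_\alpha^\Omega[w] \geq \alpha c^2 |\partial\Omega| = |\Omega|^2/(\alpha|\partial\Omega|)$, which is~\eqref{core}. For the equality statement, when~$\Omega$ is a ball both~$u$ and~$v$ are radial, so $w$ is a radial harmonic function on~$\Omega$, hence constant, and the inequality chain collapses to an equality. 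The main conceptual point I foresee is recognising that $u-v$ is a harmonic function whose boundary mean is dictated by the divergence theorem applied to $-\Delta u = 1$, so that $\sigma_1(\Omega)$ appears naturally when one isolates the mean-zero fluctuation of its trace; the remaining steps are routine integrations by parts and a single application of the Steklov inequality.
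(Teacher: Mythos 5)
Your proof is correct and follows essentially the same route the paper attributes to Bandle and Wagner (the paper only cites~\cite{BW-sign} and does not reproduce the argument): write $u=v+w$ with~$v$ the Dirichlet torsion function and~$w$ harmonic, establish $\tau_\alpha(\Omega)-\tau_D(\Omega)=\delta_\alpha^\Omega[w]$, fix the boundary mean of~$w$ via the divergence theorem, and use the Steklov problem to control the fluctuation. The only minor stylistic difference is that you invoke the Rayleigh-quotient characterisation of $\sigma_1(\Omega)$ for the mean-zero trace of~$w$ instead of expanding~$w$ in the full Steklov eigenfunction series, which amounts to the same two observations (constant mode pinned by $\int_\Omega(-\Delta u)=|\Omega|$, higher modes nonnegative since $\alpha+\sigma_k\ge\alpha+\sigma_1>0$).
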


The proof of~\cite[Lem.~2]{BW-sign} is based on writing~$u$ 
as a sum of the Dirichlet torsional rigidity 
and a harmonic function,
the latter being expanded in a series of the Steklov eigenfunctions.

It is remarkable that the proof of Theorem~\ref{Thm.Bandle}
due to~\cite{BW-sign}
as well as of our present results can be reduced 
to the crude inequality~\eqref{core} 
and to estimating the Dirichlet torsional rigidity $\tau_D(\Omega)$ from below.

\subsection{The radial cases}
\subsubsection{The Robin torsional rigidity of a ball}
It is straightforward to check that the radial solution 
of~\eqref{capacity} for $\Omega = B_R$ and $\alpha\not=0$ reads
\begin{equation*} 
  u(x) = \frac{1}{2d} \left(R^2-|x|^2\right) + \frac{R}{\alpha d} \,.
\end{equation*}
It coincides with~\eqref{solution} provided that 
$
 -\alpha \not\in \{\sigma_k(B_R)\}_{k\in\Nat}
$.
We observe that~$u$ generally does not have a constant sign in~$B_R$.
More specifically, 
$u$~is positive in~$B_R$ if $\alpha>0$,
it is negative in~$B_R$ if 
$\alpha \in [-\frac{2}{R},\frac{1}{R})$ 
and it changes sign in~$B_R$ whenever $\alpha < -\frac{2}{R}$.

Under the condition $-\alpha \not\in \{\sigma_k(B_R)\}_{k\in\Nat}$,
we verify that
\begin{equation}\label{ball} 
  \frac{\tau_\alpha(B_R)}{|\Sphere^{d-1}|} = 
  \frac{R^{d+2}}{d^2(d+2)} + \frac{R^{d+1}}{\alpha d^2}
  \,.
\end{equation}
It follows that
\begin{equation*} 
  \tau_\alpha(B_R) = 0
  \qquad \Longleftrightarrow \qquad 
  \alpha = - \frac{d+2}{R} =: \alpha_* 
  \,.
\end{equation*}
Moreover, $\int_{B_R} u < 0$ if $\alpha \in (\alpha^*,0)$ 
and $\int_{B_R} u > 0$ 
if $\alpha \in (-\infty,\alpha^*) \cup (0,+\infty)$. 
Note that $\alpha_* < -\sigma_1(B_R)$.

\subsubsection{Dirichlet--Neumann torsional rigidity of a spherical shell}
Finally, we consider the following Dirichlet--Neumann torsion problem in the spherical shell:
\begin{equation}\label{capacity.annulus}
\left\{
\begin{aligned}
  -\Delta u &=1 
  && \mbox{in} \quad A_{R_1,R_2} \,, \\
  u &=0  
  && \mbox{on} \quad \partial B_{R_2} \,, \\
  \frac{\partial u}{\partial n} &=0  
  && \mbox{on} \quad \partial B_{R_1} \,,
\end{aligned}  
\right.
\end{equation}
Denote $\tau_{DN}(A_{R_1,R_2}) := \int_{A_{R_1,R_2}} u$
with~$u$ being the solution of the problem \eqref{capacity.annulus}.
By simple computations, we have 
\begin{equation}\label{torsion_annulus}
  \tau_{DN}(A_{R_1,R_2}) =
  \begin{cases}
  \displaystyle
\frac{\pi}{8} \left[4 R_1^4 \log \left(\frac{R_2}{R_1}\right)
+3 R_1^4-4 R_1^2 R_2^2+R_2^4\right] 
  & \mbox{if} \quad d=2,
\\
   |\Sphere^{d-1}|
   \left(
   \dfrac{R_2^{d+2}}{d^2(d+2)}+\dfrac{R_1^{d+2}}{(d^2-4)}
-\dfrac{R_1^d R_2^2}{d^2}-\dfrac{R_1^{2d}R_2^{2-d}}{d^2(d-2)}
  \right)
  & \mbox{if} \quad d \geq 4.
  \end{cases}
\end{equation}
%

\section{The lower dimensional cases \texorpdfstring{$d=2,3$}{d=2,3} }\label{Sec.2D}

In this Section we study 
separately our main theorems 
in dimension $d=2$ and $d=3$.

\subsection{The planar case}
We start with the planar case by proving our Theorem \ref{Thm.2D}
with the isoperimetric constraint.
As in the proof of the quoted Theorem \ref{Thm.Bandle} contained in~\cite{BW-sign}, the main ingredient
is the following estimate of the Dirichlet torsional rigidity
due to Payne and Weinberger (see~\cite{Payne-Weinberger_1961}).
\begin{Proposition}[Payne and Weinberger \cite{Payne-Weinberger_1961}]\label{Prop.Payne}
Let $d=2$ and assume that~$\Omega$ is simply connected.  
Then
$$
  \tau_D(\Omega) \geq \tau_{DN}(A_{R_1,R_2}) 
  \,
$$
where $A_{R_1,R_2}$
is the annulus of radii such that $|A_{R_1,R_2}| = |\Omega|$
and $|\partial B_{R_2}| = |\partial\Omega|$; explicitly,
$$
  R_1 := \frac{\sqrt{|\partial\Omega|^2-4\pi|\Omega|}}{2\pi}
  \qquad \mbox{and} \qquad
  R_2 := \frac{|\partial\Omega|}{2\pi}
$$
\end{Proposition}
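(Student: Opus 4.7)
Let $u$ denote the Dirichlet torsion function in $\Omega$ (so $-\Delta u = 1$ in $\Omega$ and $u = 0$ on $\partial\Omega$), and for $t\in(0,\max u)$ set $\Omega_t := \{u > t\}$, $A(t) := |\Omega_t|$, $L(t) := \mathcal{H}^1(\{u = t\})$. Integrating $-\Delta u = 1$ over $\Omega_t$ and applying the coarea formula give the classical identities $A(t) = \int_{\{u=t\}} |\nabla u|\,\mathrm{d}\mathcal{H}^1$ and $-A'(t) = \int_{\{u=t\}} |\nabla u|^{-1}\,\mathrm{d}\mathcal{H}^1$, and Cauchy--Schwarz yields $L(t)^2 \leq A(t)\cdot(-A'(t))$. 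Layer-cake rewrites $\tau_D(\Omega) = \int_0^{\max u} A(t)\,\mathrm{d}t$, and analogously $\tau_{DN}(A_{R_1,R_2}) = \int_0^{\max v} \tilde A(t)\,\mathrm{d}t$ with $\tilde A(t) = \pi(\tilde r(t)^2 - R_1^2)$ for the Dirichlet--Neumann radial solution $v$ on the annulus.

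My strategy is to compare the two representations through a common area parameter. Let $t(s)$ and $\tau(s)$ denote the inverse distribution functions of $A$ and $\tilde A$ on $s \in [0,|\Omega|]$, so that $\tau_D(\Omega) = \int_0^{|\Omega|} t(s)\,\mathrm{d}s$ and $\tau_{DN}(A_{R_1,R_2}) = \int_0^{|\Omega|}\tau(s)\,\mathrm{d}s$, both vanishing at $s = |\Omega|$. Cauchy--Schwarz rewrites as $-t'(s) \leq s/L(t(s))^2$, while on the annulus the radial symmetry of $v$ turns it into an equality, giving $-\tau'(s) = s/(4\pi s + D)$ with $D := |\partial\Omega|^2 - 4\pi |\Omega| = 4\pi^2 R_1^2$; the relevant annulus ``perimeter'' here is the coarea level-set length $\mathcal{H}^1(\{v = t\}) = 2\pi \tilde r$ (strictly smaller than $|\partial\{v > t\}| = 2\pi R_1 + 2\pi \tilde r$, since the latter includes the inner circle $\partial B_{R_1}$ on which $v = \max v$ rather than~$t$). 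Once these differential inequalities are combined, the claim $\tau_D(\Omega) \geq \tau_{DN}(A_{R_1,R_2})$ follows by backward integration from $s = |\Omega|$, with equality if $\Omega$ is a disk (in which case $R_1 = 0$ and the annulus degenerates).

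The key and main obstacle is the refined Bonnesen-type inequality controlling $L(t)^2 - 4\pi A(t)$ in terms of the boundary deficit $D$ along the level sets of $u$; a naive pointwise bound in fact fails at $t = 0^+$, since Cauchy--Schwarz on $\partial\Omega$ forces $A(0)\cdot(-A'(0^+)) \geq |\partial\Omega|^2$ with equality only for the disk, so some integrated or globalising procedure is unavoidable. The natural two-dimensional tools to carry it out would be (i)~the Riemann mapping theorem, which transforms the torsion problem on $\Omega$ into the weighted torsion problem $-\Delta U = |f'|^2$ on the unit disk with $U = 0$ on its boundary, where $f$ is a conformal equivalence $f : \{|w|<1\} \to \Omega$, and (ii)~isoperimetric inequalities of Bonnesen type applied to the doubly connected region $\Omega \setminus \overline{\Omega_t}$, whose outer boundary is $\partial\Omega$ and whose area is $|\Omega| - A(t)$. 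Both tools crucially exploit the simple-connectedness hypothesis—the topology of $\Omega \setminus \overline{\Omega_t}$ being controlled only in that case—consistently with the paper's own footnote that the result appears to require this assumption.
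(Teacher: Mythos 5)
The proposal is not a proof but a plan, and the plan has a fundamental directional error. Cauchy--Schwarz gives $-A'(t)\ge L(t)^2/A(t)$, which bounds $-t'(s)$ from \emph{above}: $-t'(s)\le s/L(t(s))^2$. If the Bonnesen-type lower bound $L(t(s))^2\ge 4\pi s+D$ that you identify as the key step held, it would give $-t'(s)\le s/(4\pi s+D)=-\tau'(s)$; integrating from $s$ to $|\Omega|$, where both $t$ and $\tau$ vanish, yields $t(s)\le\tau(s)$ for all $s$, hence $\tau_D(\Omega)\le\tau_{DN}(A_{R_1,R_2})$ --- the \emph{opposite} of the claim. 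This is not an accident of the sketch: the Cauchy--Schwarz/isoperimetric/rearrangement machinery applied to the level sets of the actual solution naturally produces upper bounds on the torsional rigidity (the Talenti/Saint-Venant direction, where one compares with a ball), and cannot by itself yield a lower bound.

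A lower bound on $\tau_D(\Omega)$ requires inserting an explicit competitor into the variational characterisation $\tau_D(\Omega)=\sup_{\psi\in W_0^{1,2}(\Omega)}\bigl(\int_\Omega\psi\bigr)^2/\int_\Omega|\nabla\psi|^2$, and this is what Payne--Weinberger (and the paper, in its higher-dimensional Theorem~\ref{Prop.new}) actually do: take $\psi(x)=\phi\bigl(R_2-\rho(x)\bigr)$, where $\rho$ is the distance to $\partial\Omega$ and $\phi$ is the radial profile of the annulus torsion function, and exploit the geometric input $|\partial\Omega_t|\le|\partial\Omega|-2\pi t$ for inner parallel sets of a simply connected planar domain (a Gauss--Bonnet consequence), which is exactly where simple connectedness enters. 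Your plan instead builds the comparison on the level sets of $u$ itself, which is structurally the wrong tool here. Finally, even within your plan the crucial Bonnesen step is not established: you explicitly flag it as "the key and main obstacle," note that a pointwise bound fails, and only list tools that "would be" used. As written, the argument neither closes its own gap nor points in the correct direction.
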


Note that~$R_1$ is well defined by the isoperimetric inequality.
The proof of Proposition~\ref{Prop.Payne} is based on 
the method of parallel coordinates,
where functions with level lines parallel to the boundary~$\partial\Omega$
are chosen as trial functions in the variational characterisation 
of~$\tau_D(\Omega)$. Then we can give the proof of Theorem \ref{Thm.2D}
\begin{proof}[Proof of Theorem \ref{Thm.2D}]

    To achieve Theorem~\ref{Thm.2D} via Lemma~\ref{Lem.core}
we use  the explicit formulation of $\tau_{DN}(A_{R_1,R_2})$
given
in~\eqref{torsion_annulus},
and we compare it  with the torsional rigidity~$\tau_D(B_R)$
of a disk~$B_R$ such that $|\partial B_R| = |\partial\Omega|$, 
\ie\ $R=R_2$
(the explicit formula can be deduced from~\eqref{ball}
after sending $\alpha \to \infty$). 
Defining $t := R_1 / R_2 \in [0,1)$ and using Lemma~\ref{Lem.core},
we get
\begin{equation}\label{rhs}
  \tau_\alpha(\Omega) - \tau_\alpha(B_R)
  \geq 
  \pi R^4 \left(
  -\frac{1}{2} t^4 \log(t) 
  + \frac{3}{8} t^4 - \frac{1}{2} t^2 
  - \frac{(1-t^2)^2-1}{2 \alpha R} 
  \right)
  .
\end{equation}
Assuming $t \in (0,1)$ (the case $t=0$ is trivial)
and requiring that the right-hand side is non-negative 
lead to the following smallness condition on~$\alpha$:
$$
  - 2 \alpha R \leq \frac{(1-t^2)^2-1}{-\frac{1}{2} t^4 \log(t) 
  + \frac{3}{8} t^4 - \frac{1}{2} t^2}
  \,.
$$
Note that the denominator is non-zero (in fact, negative)
for every $t \in (0,1)$.
What is more, the right-hand side is bounded from below by~$4$.
Consequently, the right-hand side of~\eqref{rhs}
is non-negative provided that  
$ 
  \alpha \geq -\frac{2}{R} 
$.
In addition, the hypothesis of Lemma~\ref{Lem.core} requires
$
  \alpha > -\sigma_1(\Omega) 
$.
By the Weinstock inequality, 
$
  \sigma_1(\Omega) \leq \sigma_1(B_R) = \frac{1}{R}
$. 
This concludes the proof of Theorem~\ref{Thm.2D}.
 
  \end{proof}
\subsection{The three-dimensional case }\label{Sec.3D}
%
Now we proceed with the proof of Theorem~\ref{Thm.all}
in the special case of dimension $d=3$.
It was generally accepted that the method of parallel coordinates 
leading to Proposition~\ref{Prop.Payne} is not easy to extend 
to higher dimensions.
Recently, however, Vikulova~\cite{Vikulova_2022} 
has managed to apply it in three dimensions.
Her approach leads to the following result
(though it is not explicitly stated in~\cite{Vikulova_2022}).
\begin{Proposition}[Vikulova~\cite{Vikulova_2022}]\label{Prop.3D}
Let $d=3$ and assume that the bounded domain~$\Omega$ is either convex 
or axiconvex with~$\partial\Omega$ homeomorphic to the sphere.
Then
$$
  \tau_D(\Omega) \geq \tau_{DN}(A_{R_1,R_2}) 
  \,
$$
where $\tau_{DN}(A_{R_1,R_2})$ is defined as in~\eqref{capacity.annulus}
but with $A_{R_1,R_2} := B_{R_2} \setminus \overline{B_{R_1}}$
being a spherical shell of radii
such that $|A_{R_1,R_2}| = |\Omega|$
and $|\partial B_{R_2}| = |\partial\Omega|$; explicitly,
$$
  R_1 := \frac{\sqrt[3]{|\partial\Omega|^{3/2}-6\pi^{1/2}|\Omega|}}{2\pi^{1/2}}
  \qquad \mbox{and} \qquad
  R_2 := \frac{|\partial\Omega|^{1/2}}{2\pi^{1/2}}
  \,.
$$
\end{Proposition}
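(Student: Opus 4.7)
The plan is to adapt the method of parallel coordinates used in Proposition~\ref{Prop.Payne} to three dimensions, replacing the planar Gauss--Bonnet step by the Aleksandrov--Fenchel bound \eqref{iso_W_2}. Writing $\rho(x):=\dist(x,\partial\Omega)$, $V(t):=|\Omega_t|$, and $P(t):=|\partial\Omega_t|$, I would test the Dirichlet variational principle
\begin{equation*}
  \tau_D(\Omega) \ =\ \sup_{\psi \in W^{1,2}_0(\Omega)\setminus\{0\}} \frac{\bigl(\int_\Omega\psi\bigr)^{2}}{\int_\Omega|\nabla\psi|^2}
\end{equation*}
against a parallel-coordinate profile $\psi(x):=f(\rho(x))$ with $f(0)=0$. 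The coarea formula together with $|\nabla\rho|=1$ a.e.\ yields $\int_\Omega\psi=\int_0^{r_\Omega}f'(t)V(t)\,\der t$ and $\int_\Omega|\nabla\psi|^2=\int_0^{r_\Omega}f'(t)^2 P(t)\,\der t$, and a Cauchy--Schwarz optimization (extremal $f'=V/P$) produces
\begin{equation*}
  \tau_D(\Omega)\ \ge\ \int_0^{r_\Omega}\frac{V(t)^2}{P(t)}\,\der t.
\end{equation*}
Applied to the radial Dirichlet--Neumann solution on $A_{R_1,R_2}$, the same step is an equality and gives $\tau_{DN}(A_{R_1,R_2}) = \int_0^{R_2-R_1}V_0(t)^2/P_0(t)\,\der t$ with $V_0(t):=|B_1|((R_2-t)^3-R_1^3)$ and $P_0(t):=4\pi(R_2-t)^2$.

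For convex $\Omega$, each $\Omega_t$ is itself convex, so the lemma preceding Lemma~\ref{lem_der_per_e} combines with \eqref{iso_W_2} at $d=3$ to give
\begin{equation*}
  -P'(t)\ \ge\ 6\,W_2(\Omega_t)\ \ge\ 6\cdot 3^{-1/2}|B_1|^{1/2}\,P(t)^{1/2}\ =\ 4\sqrt{\pi}\,P(t)^{1/2},
\end{equation*}
equivalently $\frac{\der}{\der t}\sqrt{P(t)}\le -2\sqrt{\pi}$. Together with the perimeter constraint $P(0)=|\partial\Omega|=4\pi R_2^2$ this integrates to $P(t)\le P_0(t)$, and then $V(0)=|\Omega|=V_0(0)$ combined with $V'=-P\ge -P_0=V_0'$ forces $V(t)\ge V_0(t)$ on $[0,R_2-R_1]$. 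Since $V(r_\Omega)=0$ while $V_0$ is positive on $[0,R_2-R_1)$, the inradius must satisfy $r_\Omega\ge R_2-R_1$. The pointwise bounds $V\ge V_0\ge 0$ and $0\le P\le P_0$ then give $V^2/P\ge V_0^2/P_0$ on $[0,R_2-R_1]$, and non-negativity of $V^2/P$ on $[R_2-R_1,r_\Omega]$ together with the variational lower bound completes the comparison. A sanity check for $\Omega=B_{R_2}$ ($R_1=0$) recovers $\tau_D(B_{R_2})=|B_1|R_2^{d+2}/[d(d+2)]$, confirming that the variational lower bound is saturated on balls.

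In the axiconvex case with $\partial\Omega$ homeomorphic to a sphere, convexity of the level sets $\Omega_t$ is no longer available and \eqref{iso_W_2} cannot be applied. Here I would invoke the analysis of \cite{Vikulova_2022}, which exploits the rotational symmetry to reduce to a planar generatrix and establishes the same differential inequality $-P'(t)\ge 4\sqrt{\pi}P(t)^{1/2}$ directly from the surface-of-revolution structure, the spherical topology of $\partial\Omega$ being used to prevent the inner parallel sets from splitting into disconnected components. Once that differential inequality is in hand, the remainder of the argument is identical to the convex case. The main obstacle is precisely this axiconvex step: extending an Aleksandrov--Fenchel-type bound outside the convex class requires a genuine non-convex geometric analysis, and it is the reason the proposition is formulated only for convex or axiconvex-with-spherical-boundary three-dimensional sets rather than for general Lipschitz domains; the remaining bookkeeping issues, namely the admissibility of the extremal profile $f$ in $W^{1,2}_0(\Omega)$ (handled by standard approximation, since $V/P$ is continuous though possibly not $C^1$) and the validity of integrating the differential inequality for $P$ up to $t=R_2-R_1$ (guaranteed by $r_\Omega\ge R_2-R_1$), are routine.
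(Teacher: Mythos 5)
Your argument is correct and is, at its core, the same method-of-parallel-coordinates strategy that underlies both Vikulova's proof and the paper's own independent derivation for the convex case (Theorem~\ref{Prop.new}): the differential inequality $-P'(t)\ge d(d-1)W_2(\Omega_t)$, Aleksandrov--Fenchel~\eqref{iso_W_2}, integration with the boundary data $P(0)=|\partial\Omega|$, $V(0)=|\Omega|$, and the Dirichlet variational principle restricted to functions of $\rho$. The presentation differs in one place: you first optimise the profile $f$ by Cauchy--Schwarz to produce the clean lower bound $\tau_D(\Omega)\ge\int_0^{r_\Omega}V^2/P\,\der t$ and then compare the integrands pointwise with $V_0^2/P_0$, whereas the paper's Theorem~\ref{Prop.new} takes a \emph{fixed} trial function obtained by transplanting the explicit annulus solution $\phi$ (so that $f'=V_0/P_0$ rather than $V/P$) and then compares $\int\psi$ with $\int u$ and $\int|\nabla\psi|^2$ with $\int|\nabla u|^2$ via the level-set inequalities $|\partial E_t|\le|\partial B_t|$, $|E_t|\ge|B_t|$. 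Both routes rest on the identical geometric input and yield the same bound; yours is marginally more compact, while the paper's avoids having to certify admissibility of the optimised profile (a point you correctly flag as routine). For the axiconvex case you defer to Vikulova's analysis of the surface-of-revolution generatrix; this is exactly what the paper does as well, since the proposition is attributed to~\cite{Vikulova_2022} and the paper's own proof (Theorem~\ref{Prop.new}) covers only convex domains.
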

 \begin{proof}[Proof of Theorem \ref{Thm.all} for $d=3$]
 
Proceeding as in the planar case,
to achieve Theorem~\ref{Thm.2D} via Lemma~\ref{Lem.core}
we use  the explicit formulation of $\tau_{DN}(A_{R_1,R_2})$
given in~\eqref{torsion_annulus},
and we compare it  with the torsional rigidity~$\tau_D(B_R)$
of a disk~$B_R$ such that $|\partial B_R| = |\partial\Omega|$, 
\ie\ $R=R_2$, see \eqref{ball}, obtaining 
\begin{equation}\label{rhs3D}
  \tau_\alpha(\Omega) - \tau_\alpha(B_R)
  \geq  
  \frac{4\pi}{45} \left(
  9 R_1^5 - 5 \frac{R_1^6}{R_2} 
  - 5 R_1^3 R_2^2 
  + R_2^5
  \right)
  - \frac{4\pi}{45} R^5
  +\frac{4\pi}{9\alpha} \left(
  \frac{(R_2^3-R_1^3)^2}{R_2^2}-R^4
  \right)
  ,
\end{equation}
where~$B_R$ is a ball such that $|\partial B_R| = |\partial\Omega|$
or $|B_R| = |\Omega|$, depending on the isoperimetric 
or isochoric constraint, respectively.
 
If we assume the  isoperimetric constraint then  $R=R_2$, so~\eqref{rhs3D} reduces to
\begin{equation}\label{rhs3Dper}
  \tau_\alpha(\Omega) - \tau_\alpha(B_R)
  \geq  
  \frac{4\pi}{45} R^5 \left(
  9 t^5 - 5 t^6 - 5t^3 + 5 \frac{(1-t^3)^2-1}{\alpha R}  
  \right)
\end{equation}
where we keep to denote $t := R_1 / R_2 \in [0,1)$.
Assuming $t \in (0,1)$
and requiring that the right-hand side is non-negative 
lead to the following smallness condition on~$\alpha$:
$$
  - \alpha R \leq 5 \ \frac{(1-t^3)^2-1}{9 t^5 - 5 t^6 - 5t^3}
  \,.
$$
Note that the denominator is non-zero (in fact, negative)
for every $t \in (0,1)$.
What is more, the right-hand side is bounded from below by~$2$.
Consequently, the right-hand side of~\eqref{rhs3Dper}
is non-negative provided that  
$ 
  \alpha \geq -\frac{2}{R} 
$.
In addition, the hypothesis of Lemma~\ref{Lem.core} requires
$
  \alpha > -\sigma_1(\Omega) 
$.
By the Weinstock inequality generalised to higher dimensions~\cite{BFNT-21}, 
$
  \sigma_1(\Omega) \leq \sigma_1(B_R) = \frac{1}{R}
$. 
This concludes the isoperimetric part of 
the proof of Theorem~\ref{Thm.all} if $d=3$.

In order to complete the proof, finally let us assume the isochoric constraint. 
In this case $R=\sqrt[3]{R_2^3-R_1^3}$, 
so~\eqref{rhs3D} reduces to
\begin{equation}\label{rhs3Dchor}
  \tau_\alpha(\Omega) - \tau_\alpha(B_R)
  \geq  
  \frac{4\pi}{45} R^5 \left(
  9 t^5 - 5 t^6 - 5t^3 + 1 
  - (1-t^3)^{5/3}
  + 5 \frac{(1-t^3)^2-(1-t^3)^{4/3}}{\alpha R}  
  \right)
\end{equation}
where we keep to denote $t := R_1 / R_2 \in [0,1)$.
Assuming $t \in (0,1)$
and requiring that the right-hand side is non-negative 
lead to the following smallness condition on~$\alpha$:  
$$ 
  - \alpha R \leq 5 \ \frac{(1-t^3)^{7/3}-(1-t^3)^{5/3}}
  {9 t^5 - 5 t^6 - 5t^3 + 1 - (1-t^3)^{5/3}}
  \,.
$$
Note that the denominator is non-zero (in fact, negative)
for every $t \in (0,1)$.
What is more, the right-hand side is bounded from below by~$1$.
Consequently, the right-hand side of~\eqref{rhs3Dper}
is non-negative provided that  
$ 
  \alpha \geq -\frac{1}{R} 
$.
In addition, the hypothesis of Lemma~\ref{Lem.core} requires
$
  \alpha > -\sigma_1(\Omega) 
$.
By the Weinstock inequality generalised 
to higher dimensions under the isochoric constraint~\cite{Brock_2001}, 
$
  \sigma_1(\Omega) \leq \sigma_1(B_R) = \frac{1}{R}
$. 
This concludes the isochoric part of 
the proof of Theorem~\ref{Thm.all} if $d=3$.
  \end{proof}
\section{The general case of arbitrary dimensions}\label{Sec.all}
%
In this final section, we establish Theorem~\ref{Thm.all}
in all dimensions. 
To do that we need the following extension of Propositions~\ref{Prop.Payne} and~\ref{Prop.3D}
to all dimensions, which seems to be new for $d \geq 4$.

\begin{Theorem}\label{Prop.new}
Let $d \geq 1$ and assume that~$\Omega$ is a bounded convex domain.
Then
$$
  \tau_D(\Omega) \geq \tau_{DN}(A_{R_1,R_2}) 
  \,
$$
where $\tau_{DN}(A_{R_1,R_2})$ is defined as in~\eqref{capacity.annulus}
but with $A_{R_1,R_2} := B_{R_2} \setminus \overline{B_{R_1}}$
being a spherical shell of radii
such that $|A_{R_1,R_2}| = |\Omega|$
and $|\partial B_{R_2}| = |\partial\Omega|$. 
\end{Theorem}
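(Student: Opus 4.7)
The idea is to use the variational principle for the Dirichlet torsional rigidity,
\begin{equation*}
  \tau_D(\Omega) = \sup_{\psi \in W_0^{1,2}(\Omega) \setminus \{0\}}
  \frac{\left(\int_\Omega \psi\right)^{\!2}}{\int_\Omega |\nabla\psi|^2},
\end{equation*}
combined with a parallel-coordinate trial function modelled on the radial torsion of the shell. Writing the solution of~\eqref{capacity.annulus} as $u_0(x) = v(|x|)$ with $v(R_2)=0$ and $v'(R_1) = 0$, I~set $\tilde v(t) := v(R_2-t)$ for $t \in [0,R_2-R_1]$ and extend it by $\tilde v(t) := v(R_1)$ for $t \geq R_2-R_1$. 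The condition $v'(R_1)=0$ makes this extension of class~$C^1$, non-decreasing, and vanishing at $0$; consequently $\psi(x) := \tilde v(\rho(x))$ is Lipschitz with vanishing trace on $\partial\Omega$ and belongs to $W_0^{1,2}(\Omega)$. Using $|\nabla\rho|=1$ almost everywhere on $\Omega$ and the coarea formula, the Rayleigh quotient of this trial function equals $N_\Omega^2/D_\Omega$ with
\begin{equation*}
  N_\Omega := \int_0^{r_\Omega} \tilde v(t)\,P_\Omega(t)\,\der t,
  \qquad
  D_\Omega := \int_0^{R_2-R_1} \tilde v'(t)^2\,P_\Omega(t)\,\der t,
  \qquad P_\Omega(t) := |\partial\Omega_t|.
\end{equation*}

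The heart of the proof is the pointwise comparison $P_\Omega(t) \leq |\partial B_{R_2-t}| =: P_A(t)$ for every $t \in [0,r_\Omega]$. Since $\rho$ is concave on the convex set~$\Omega$, each $\Omega_t$ is itself convex; combining Lemma~1 applied to~$\Omega$ with the Aleksandrov--Fenchel inequality~\eqref{iso_W_2} applied to~$\Omega_t$ yields the clean differential inequality
\begin{equation*}
  -\frac{\der}{\der t}\,P_\Omega(t)^{\frac{1}{d-1}} \;\geq\; |\Sphere^{d-1}|^{\frac{1}{d-1}}.
\end{equation*}
Integrating this from $0$ to $t$ and using $|\partial\Omega|^{1/(d-1)} = |\Sphere^{d-1}|^{1/(d-1)} R_2$ (which follows from $|\partial\Omega| = |\partial B_{R_2}|$) gives the desired bound $P_\Omega(t) \leq |\Sphere^{d-1}|(R_2-t)^{d-1} = P_A(t)$, and incidentally also $r_\Omega \leq R_2$.

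Two consequences of this pointwise bound close the argument. Directly, $D_\Omega \leq \int_0^{R_2-R_1}\tilde v'(t)^2 P_A(t)\,\der t = \int_{A_{R_1,R_2}}|\nabla u_0|^2$. For the numerator, integration by parts using $\tilde v(0)=0$ and $\tilde v'\equiv 0$ on $[R_2-R_1,\infty)$ yields $N_\Omega = \int_0^{R_2-R_1} \tilde v'(s)\,|\Omega_s|\,\der s$; integrating the pointwise bound $P_\Omega \leq P_A$ on $[0,s]$ and subtracting from the volume constraint $|\Omega|=|A_{R_1,R_2}|$ produces the reversed bound $|\Omega_s| \geq |A_s|$, which together with $\tilde v' \geq 0$ upgrades to $N_\Omega \geq \int_{A_{R_1,R_2}} u_0$. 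Combining with the elementary identity $\int_{A_{R_1,R_2}} u_0 = \int_{A_{R_1,R_2}}|\nabla u_0|^2 = \tau_{DN}(A_{R_1,R_2})$ (obtained by testing~\eqref{capacity.annulus} against $u_0$, both boundary terms vanishing thanks to the mixed Dirichlet--Neumann conditions), I~obtain
\begin{equation*}
  \tau_D(\Omega) \;\geq\; \frac{N_\Omega^2}{D_\Omega} \;\geq\; \tau_{DN}(A_{R_1,R_2}).
\end{equation*}
The main obstacle I anticipate is technical rather than structural: rigorously justifying Lemma~1 and the coarea manipulations when $\rho$ is merely Lipschitz, and verifying that Aleksandrov--Fenchel applies to every level set~$\Omega_t$. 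The decisive conceptual observation that makes the whole scheme work is that constant-extending $\tilde v$ past $R_2-R_1$ leaves the denominator unaffected while the extra contribution to the numerator is exactly compensated by the dual rearrangement-type inequality $|\Omega_s| \geq |A_s|$.
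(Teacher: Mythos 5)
Your proposal is correct and follows essentially the same route as the paper's proof: both construct the trial function $\psi(x)=\tilde v(\rho(x))$ modelled on the radial Dirichlet--Neumann torsion of the shell, derive a perimeter comparison for the level sets from the parallel-coordinate lemma combined with the Aleksandrov--Fenchel inequality \eqref{iso_W_2} (you integrate the cleaner form $-\frac{\der}{\der t}P_\Omega(t)^{1/(d-1)}\geq |\Sphere^{d-1}|^{1/(d-1)}$, the paper the equivalent form for $|\partial E_t|$ with $E_t=\{\psi>t\}$), use the volume constraint to reverse the inequality for the level-set measures, and conclude from the variational characterisation \eqref{minimax}. The only difference is cosmetic: you parametrise by the distance $t=\rho(x)$ and invoke the first lemma, while the paper parametrises by the value $t=\psi(x)$ and invokes Lemma~\ref{lem_der_per_e}; these are equivalent under the monotone change of variable $\tilde v$.
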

\begin{proof}
The case $d=1$ is elementary. 
At the same time, Proposition~\ref{Prop.Payne} covers the two-dimensional case 
under a milder hypothesis. Therefore, we restrict to $d \geq 3$.
The case $d=3$ is kept in order to present an alternative proof
of Proposition~\ref{Prop.3D}.

If $d \geq 3$,
it is easy to verify that the solution~$u$ to~\eqref{capacity.annulus}
is given by 
\[
  u(x)=\displaystyle \frac{R_2^2-|x|^2}{2d}
  +\frac{R_1^d}{(d-2)d}\left(R_2^{2-d}-|x|^{2-d}\right)
  \,.
\] 
%
%
Note that~$u$ is positive and radially symmetric.
Defining $\phi:[R_1,R_2] \to \Real$ by $u(x)=:\phi(|x|)$,
we see that~$\phi$ is decreasing and
\begin{align*}
   u_M:= \sup_{A_{R_1,R_2}} u =\phi(R_1)
   \,.
\end{align*}

We apply the method of parallel coordinates following~\cite{BFNT}. 
The idea is to construct a suitable trial function $\psi \in W_0^{1,2}(\Omega)$
in the variational characterisation of the Dirichlet torsional rigidity~\eqref{minimax},
which depends solely on the distance function from the boundary of~$\Omega$.
More specifically, we set
\begin{equation}\label{utest}
\psi(x):=
\begin{cases}
\phi\big(R_2-\rho(x)\big)\quad\ \ &\text{if} \quad  \rho(x)< R_2-R_1 \,,
\\
u_M\qquad\qquad&\text{if} \quad \rho(x)\geq R_2-R_1 \,,
\end{cases}
\end{equation}
where $\rho(x) $ is the distance function.
Obviously, $\psi=0$ on~$\partial\Omega$ and 
\begin{equation}\label{maxima}
  \psi_M:=\max_{\Omega} \psi \leq u_M
  \,.
\end{equation}
Moreover, for every $t \in [0,u_M]$, we have
\begin{gather*}
  |\nabla \psi|_{\psi=t} = |\nabla u|_{u=t} =: g(t) 
\end{gather*}

Define the following  sets:
%
\begin{equation}\label{superlevelsets}
\begin{split}
	 E_{t}:= & \{ x\in\Omega:\ \psi(x)>t  \} \,, \\
	 A_{t}:= & \{x\in A_{R_1,R_2}:\ u(x)>t\} \,,\\
	 B_{t}:= & A_t\cup \overline{B_{R_1}} \,.
\end{split}
\end{equation}
By Lemma \ref{lem_der_per_e} 
and the Aleksandrov--Fenchel inequality~\eqref{iso_W_2},
we have, for almost every $t \in (0,\psi_M)$, 
\begin{equation}\label{Steiner}
-\dfrac{\der}{\der t} |\partial E_{t}|
\geq d(d-1)\dfrac{W_2(E_{t})}{g(t)}
\geq d(d-1)d^{-\frac{d-2}{d-1}}|B_1|^{\frac{1}{d-1}}
\dfrac{|\partial E_{t}|^{\frac{d-2}{d-1}}}{g(t)} \,.
\end{equation}
At the same time, for almost every $t \in (0,u_M)$,
\begin{equation}\label{int_per_ball}
	-\dfrac{\der}{\der t} |\partial B_{t}|
	=d(d-1)d^{-\frac{d-2}{d-1}} |B_1|^{\frac{1}{d-1}} 
	\dfrac{|\partial B_{t}|^{\frac{d-2}{d-1}}}{g(t)}
	\,.
\end{equation}
Because of the hypothesis $|\partial\Omega|=|\partial B_{R_2}|$, by \eqref{Steiner} and \eqref{int_per_ball}, integrating from $0$ to $t$, we obtain
\begin{equation}\label{perimetro_vivo}
  |\partial E_{t}| \leq |\partial B_{t}|, 
\end{equation} 
for every $t \in (0,\psi_M)$. 
Using this inequality together with~\eqref{maxima}
in the coarea formula,
we obtain
\begin{equation}
\label{gradient_estimates_e+}
  \int_{\Omega}|\nabla \psi|^2
  =  \int_{0}^{\psi_M} g(t)\, |\partial E_t| \, \der t 
  \leq \int_{0}^{u_M} g(t) \, |\partial B_{t}| \,\der t
  =\int_{A_{R_1,R_2}}|\nabla u|^2 
  \,.
\end{equation}

Now, we define $\mu(t):=|E_{t}|$ and $\eta(t):=|B_{t}|$. 
Using again the coarea formula and \eqref{perimetro_vivo}, 
we obtain 
\begin{equation*}
  \mu'(t)=-\int_{\{ u=t\}}\dfrac{1}{|\nabla \psi|} 
  =- \dfrac{|\partial E_{t}|}{g(t)}
  \geq -\dfrac{|\partial B_{t}|}{g(t)}
  =-\int_{\{ v=t\}}\dfrac{1}{|\nabla u|} 
  =\eta'(t)
\end{equation*}
for every $t \in (0\psi_M)$.
Since $\mu(0)=\eta(0)$ by the hypothesis $|\Omega|=|A_{R_1,R_2}|$, 
an integration from~$0$ to~$t$ yields
\begin{equation*}
\mu(t)\geq\eta(t)
\end{equation*}
for every $t \in [0,u_M)$.
Consequently,
%
\begin{equation}\label{L_p_estimates_e+}
	\int_{\Omega} \psi
	= \int_{0}^{u_M} t \, \mu(t) \, \der t
	\geq
	\int_{0}^{u_M} t \, \eta(t)\, \der t
	= \int_{A_{R_1,R_2}} u
	\,.
\end{equation}

In summary,
using~\eqref{gradient_estimates_e+} and~\eqref{L_p_estimates_e+}
in the variational characterisation~\eqref{minimax}, we get 
\begin{equation*}
\tau_D( \Omega)\geq
\dfrac{\displaystyle\left(\int_{\Omega}\psi\right)^2}
{\displaystyle\int_{\Omega}|\nabla \psi|^2}
\geq 
\dfrac{\displaystyle\left(\int_{A_{R_1,R_2}} u\right)^2}
{\displaystyle\int_{A_{R_1,R_2}}|\nabla u|^2}=\tau_{DN}(A_{R_1,R_2})
\,.
\end{equation*} 
This concludes the proof of the theorem.
\end{proof}

In view of Theorem~\ref{Thm.all} already proved for $d=3$ 
in Section~\ref{Sec.3D},
it is enough to apply Theorem~\ref{Prop.new} for $d \geq 4$.
However, we allow $d=3$ in order to compare the present formulae
with those from Section~\ref{Sec.3D}.

Proceeding as in Sections~\ref{Sec.2D} and~\ref{Sec.3D},
a combination of Lemma~\ref{Lem.core} with Theorem~\ref{Prop.new} yields
\begin{multline} 
\label{s_p}
\tau_\alpha(\Omega) - \tau_\alpha(B_R)
  \\
  \geq   
  \dfrac{|B_1| R_2^{d+2}}{d(d+2)}
  +\dfrac{d |B_1| R_1^{d+2}}{(d^2-4)}
  -\dfrac{R_1^d R_2^2 |B_1|}{d}
  -\dfrac{R_1^{2d}R_2^{2-d}|B_1|}{d(d-2)}
  -\frac{|\Omega|^2}{|\alpha| |\partial\Omega|} 
  - \frac{|B_1| R^{d+2}}{d(d+2)}+\frac{|B_R|^2}{|\alpha| |\partial B_R|}
  \,,
\end{multline}
where~$B_R$ is a ball such that 
$|\partial B_R| = |\partial\Omega|$
or $|B_R| = |\Omega|$, depending on the isoperimetric 
or isochoric constraint, respectively.

\subsection{The isoperimetric constraint}
If $|\partial B_R|=|\partial \Omega|$, then $R_2=R$ and \eqref{s_p} becomes 
  \begin{gather}\label{rhsallDper}
\begin{split}
\tau_\alpha(\Omega) - \tau_\alpha(B_R)
  &\geq \dfrac{d|B_1| R_1^{d+2}}{(d^2-4)}
  -\dfrac{R_1^d R^2 |B_1|}{d}-\dfrac{R_1^{2d}R^{2-d}|B_1|}{d(d-2)}
  +\frac{|B_R|^2-|\Omega|^2}{|\alpha| |\partial\Omega|}
  \,, 
   \end{split} 
  \end{gather}
where the last term is positive by the isoperimetric inequality. 
Then, in order to guarantee that the right-hand side is non-negative, 
the following inequality has to hold
  \begin{equation}\label{n_dim}
  \frac{|B_R|^2-|\Omega|^2}{|\alpha| |\partial\Omega|}
  \geq \dfrac{|B_1| R_1^d R^2}{d(d^2-4)} \left((d^2-4)-d^2 t^2+(d+2)t^d\right)
  \end{equation}
where $t:=R_1/R\in [0,1)$.
We stress that the right-hand side is positive for all $t \in (0,1)$.
Under this condition, 
recalling that $|\Omega|= |B_1| (R^d-R_1^d)$  
and   $|\partial \Omega|= d |B_1| R^{d-1}$, 
we see that~\eqref{n_dim} holds if, and only if,
\begin{equation}\label{isoperimetric.all}
 -\alpha R \leq 
 \dfrac{(d^2-4)[1-(1-t^d)^2]}
 {t^d \,[(d^2-4)-d^2 t^2+(d+2)t^d]}.
\end{equation}
Since the right-hand side is bounded from below by~$2$,
the right-hand side of~\eqref{rhsallDper}
is non-negative provided that  
$ 
  \alpha \geq -\frac{2}{R} 
$.
In addition, the hypothesis of Lemma~\ref{Lem.core} requires
$
  \alpha > -\sigma_1(\Omega) 
$.
By the Weinstock inequality generalised to higher dimensions~\cite{BFNT-21}, 
$
  \sigma_1(\Omega) \leq \sigma_1(B_R) = \frac{1}{R}
$. 
This concludes the proof of Theorem~\ref{Thm.all}.

\subsection {The isochoric constraint}  
If $|B_R|=|\Omega|$, we have $R_2^d-R_1^d=R^d$. 
The right-hand side of~\eqref{s_p} is non-negative 
if the following inequality holds
\begin{equation}\label{any.iso} 
\begin{aligned}
\lefteqn{
  \frac{(d^2-4)R^{d+1}f(t)}{|\alpha|}
  =
  \frac{d(d^2-4)|B_R|^2\left(|\partial\Omega|-|\partial B_R|\right)}
  {|\alpha||B_1||\partial\Omega| |\partial B_R|}
  }
  \\
  &\ge (d^2-4)R_1^d R_2^2+(d+2)R_1^{2d}R_2^{2-d}+ (d-2) (R^{d+2}-R_2^{d+2})-d^2 R_1^{d+2}
  \\
  &= 
  R^{d+2}(1-t^d)^{-\frac{d+2}{d}} g(t)
  \,,
\end{aligned}  
\end{equation} 
where $t:=R_1/R_2 \in [0,1)$ and 
$$
\begin{aligned}
  f(t) &:= 1-(1-t^d)^{\frac{d-1}{d}} \,,
  \\
  g(t) &:= (d^2-4)t^d+(d+2)t^{2d}-d^2t^{d+2}
  -(d-2)\left[1- (1-t^d)^{\frac{d+2}{d}}\right] \,.
\end{aligned}
$$

\begin{Lemma}\label{Lem.horror}
Let $d \geq 3$.
For every $t \in (0,1)$, one has $g(t) >0$.
\end{Lemma}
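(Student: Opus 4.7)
The plan is to show that $g$ is unimodal on $[0,1]$ with zeros only at the endpoints. A direct substitution gives $g(0)=0$ and
$$g(1) = (d^2-4)+(d+2)-d^2-(d-2) = 0,$$
so it suffices to exhibit exactly one sign change of $g'$ in $(0,1)$, from $+$ to $-$.

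Differentiating and factoring out the positive quantity $(d+2)\,t^{d-1}$ yields $g'(t) = (d+2)\,t^{d-1}\,F(t)$, where
$$
F(t) := d(d-2) + 2d\,t^d - d^2 t^2 - (d-2)(1-t^d)^{2/d}.
$$
I would then change variables to $s := t^d \in (0,1)$ and study the equivalent function
$$
\tilde F(s) := d(d-2) + 2ds - d^2\,s^{2/d} - (d-2)(1-s)^{2/d}.
$$
A direct computation gives
$$
\tilde F''(s) = 2(d-2)\,s^{\frac{2}{d}-2} + \frac{2(d-2)^2}{d^2}(1-s)^{\frac{2}{d}-2},
$$
which is strictly positive on $(0,1)$ for $d\ge 3$; thus $\tilde F$ is strictly convex.

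The boundary values $\tilde F(0) = (d-2)(d-1) > 0$ and $\tilde F(1) = 0$, combined with $\tilde F'(0^+) = -\infty$ and $\tilde F'(1^-) = +\infty$ (both consequences of $\tfrac{2}{d}-1 < 0$ for $d\ge 3$), force $\tilde F$ to achieve its minimum at some interior point $s_0\in(0,1)$ with $\tilde F(s_0) < 0$. Strict convexity then guarantees that $\tilde F$ has exactly one zero $s^{**}\in (0,s_0)$, is positive on $(0,s^{**})$ and negative on $(s^{**},1)$. Pulling back via $t\mapsto t^d$, the derivative $g'$ changes sign exactly once on $(0,1)$, from positive to negative at $t^{**}:=(s^{**})^{1/d}$. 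Hence $g$ is strictly increasing on $(0,t^{**})$ and strictly decreasing on $(t^{**},1)$, and together with $g(0)=g(1)=0$ this yields $g(t)>0$ for every $t\in(0,1)$.

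The main subtlety I anticipate lies not in the architecture of the argument but in the bookkeeping of signs when differentiating $s^{2/d}$ and $(1-s)^{2/d}$: the exponent $2/d-1$ is negative for $d\ge 3$, which is precisely what makes $\tilde F'$ blow up at both endpoints and what makes $\tilde F''$ positive. Once the convexity of $\tilde F$ is isolated, the unimodal shape of $g$ is automatic, and the conclusion follows in one line from the common boundary value $g(0)=g(1)=0$.
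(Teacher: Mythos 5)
Your proof is correct, and it follows the same macroscopic plan as the paper (factor $g'(t)=(d+2)t^{d-1}F(t)$, show that $F$ changes sign exactly once from $+$ to $-$ on $(0,1)$, conclude that $g$ is unimodal with $g(0)=g(1)=0$); your $F$ coincides with the paper's auxiliary function $h$. The difference lies in how the single sign change of $F$ is established. The paper works directly with $h(t)$ in the variable $t$ and needs to compute $h'''$, show $h'''>0$, and then run a cascade of implications through $h''$, $h'$, $h$, tracking boundary values and asymptotics at each level. You instead change variables $s=t^d$ to pass from $F$ to $\tilde F$, for which a single computation shows $\tilde F''(s)=2(d-2)s^{2/d-2}+\tfrac{2(d-2)^2}{d^2}(1-s)^{2/d-2}>0$; strict convexity together with $\tilde F(0)=(d-1)(d-2)>0$, $\tilde F(1)=0$, and $\tilde F'(1^-)=+\infty$ immediately gives the unique interior zero and the $+/-$ sign pattern. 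Since $s=t^d$ is an increasing bijection of $(0,1)$ onto itself, the sign information transfers back to $F(t)$ unchanged. Your route stops at second derivatives and replaces the third-derivative bookkeeping with a convexity argument, which is a genuine streamlining; the trade-off is that the convexity is only visible after the substitution, a step the paper does not take. Both arguments are valid, and yours would shorten the published proof.
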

\begin{proof}
It is straightforward to check that $g(0)=0=g(1)$.
Computing the derivative, we find 
\begin{equation}\label{gh}
        g'(t) = (d+2) t^{d-1} h(t),
    \end{equation}
 where
$$
  h(t) := d^2(1-t^2) - 2d (1-t^d) - (d-2) (1-t^d)^{2/d},
  \,
$$
with
$h(0)=d^2-3d+2>0$, for $d\geq 3$, and $h(1)=0$. 

Let us study the function $h$ in the interval $(0,1)$.
Computing the first and second derivatives, we find 
$$
  h'(0)=0 
  \,, \qquad 
  h''(0)=-2d^2
  \,, \qquad 
  \lim_{t \to 1^-} h'(t) = \lim_{t \to 1^-} h''(t) = + \infty
  \,.
$$
The key is to compute the third derivative
$$
  h'''(t) = 2 (d-1)(d-2)t^{d-3}
  \left[
  d^2 + (d-2) (1-t^d)^{2/d-3} (1+t^d)
  \right]
  .
$$
Since $h'''(t)>0$ for every $t \in (0,1)$, 
it follows that $h''$ is strictly increasing in $(0,1)$. 
Moreover, since $h''(0)<0$ and $h''(1)>0$, 
there exists a unique $t_1\in (0,1)$ such that $h''(t_1)=0$.
In summary, $h''<0$ in $(0,t_1)$, $h''(t_1)=0$ and $h''>0$ in $(t_1,1)$.
Consequently, $h'$ is strictly decreasing in $(0,t_1)$ 
and strictly increasing in $(t_1,1)$.
Now, considering that $h'(0)=0$ and $h'(1)>0$, 
there exists a unique $t_2 \in (0,1)$
(actually, we know that $t_2\in(t_1,1)$)
such that $h'(t_2)=0$, $h'<0$ in $(0,t_2)$ and $h'>0$ in $(t_2,1)$. 
This means that $h$ is strictly decreasing in $[0,t_2]$
and strictly increasing in $[t_2,1]$.
Since $h(1)=0$, necessarily 
$$
  h(t_2)=  \min_{[0,1]} h <0 
  \,.
$$
Recalling that $h(0)>0$, 
there exists a unique point $t_3\in (0,1)$ such that 
$h>0$ in $(0,t_3)$, $h(t_3) = 0$ and $h<0$ in $(t_3,1)$.

By the relationship~\eqref{gh} 
and the previous analysis of~$h$,
we conclude that 
\begin{equation}\label{g'}
  g'>0 \mbox{ in } (0,t_3)
  \quad \mbox{and} \quad
  g'<0 \mbox{ in } (t_3,1),
  \quad \mbox{while} \quad g(1)=g(0)=0.
\end{equation}
This implies the desired claim that~$g$ is positive in $(0,1)$.
\end{proof}

By virtue of the lemma, the right-hand side of~\eqref{any.iso} 
is positive for every $t \in (0,1)$. 
Under this condition, the inequality of~\eqref{any.iso} holds 
provided that  
\begin{equation}\label{iso_d}
-\alpha R \le \displaystyle \frac{(d^2-4)\left[(1-t^d)^{\frac{d+2}{d}}-(1-t^d)^{\frac{2d+1}{d}}\right]}{g(t)}
\,.
\end{equation}
\begin{Lemma}\label{Lem.key}
Let $d \geq 3$.
For every $t \in (0,1)$, the right-hand side of~\eqref{iso_d}
is bounded from below by~$1$.
\end{Lemma}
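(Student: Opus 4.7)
The plan is to rewrite the inequality in the equivalent form $\Psi(t):=N(t)-g(t)\ge 0$ on $[0,1]$, where
$N(t):=(d^2-4)\bigl[(1-t^d)^{(d+2)/d}-(1-t^d)^{(2d+1)/d}\bigr]$ and $g$ is as in Lemma~\ref{Lem.horror}. A direct substitution shows $\Psi(0)=\Psi(1)=0$, so the task reduces to non-negativity on the open interval. The endpoint asymptotics are already reassuring: both $N$ and $g$ expand to $\frac{(d-1)(d^2-4)}{d}\,t^d$ as $t\to 0^+$ (so the ratio $N/g\to 1$), while as $t\to 1^-$, setting $u=1-t$, both behave like $c\,u^{(d+2)/d}$ with coefficient ratio $N/g\to (d^2-4)/(d-2)=d+2$. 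Thus the bound $N/g\ge 1$ is tight at $0$ and has ample slack near $1$, which makes the claim plausible.

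My main tool would be an iterated-derivative cascade analogous to the chain $g\to h\to h'\to h''\to h'''$ used in the proof of Lemma~\ref{Lem.horror}. Differentiate $\Psi$ once and extract the common factor $t^{d-1}$ to obtain $\Psi'(t)=(d+2)\,t^{d-1}K(t)$, where $K$ is a linear combination of $(1-t^d)^{(d+1)/d}$, $(1-t^d)^{2/d}$ and polynomial terms. Evaluate $K(0)$ and $K(1)$; if the sign of $K$ on $(0,1)$ is not immediate, differentiate again and iterate. Each further differentiation either kills or reduces by one the fractional-power factor, so after finitely many steps one arrives at an expression whose sign can be read off algebraically. Back-integration then yields a sign pattern for $K$ (one expects a unique interior zero), which combined with $\Psi(0)=\Psi(1)=0$ forces $\Psi$ to have the shape of a single interior bump, hence $\Psi\ge 0$ throughout.

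Before committing to the full cascade, I would attempt a shortcut via the substitution $s:=1-t^d\in(0,1)$, converting $\Psi$ into a sum of powers of $s$ and $1-s$; if this surfaces a manifest decomposition of $\Psi$ into non-negative blocks (for instance through Bernoulli-type bounds $(1-x)^r\ge 1-rx$ for $r\ge 1$ applied to the two fractional powers in $N$), the proof ends immediately. The main obstacle will otherwise be bookkeeping: compared with Lemma~\ref{Lem.horror}, $N$ introduces two extra non-polynomial powers $(1-t^d)^{(d+2)/d}$ and $(1-t^d)^{(2d+1)/d}$, so the branches generated by successive differentiations multiply and the case analysis grows. A secondary subtlety is that for $d=3$ the exponents $(d+2)/d$ and $(2d+1)/d$ lie below $2$ and $3$ respectively, which is the regime already exploited in the expansions near $t=1$; one should verify that the generic argument applies uniformly in $d\ge 3$ rather than requiring a separate low-dimensional analysis.
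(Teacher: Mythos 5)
Your plan follows essentially the same route as the paper: set $k(t)=N(t)-g(t)$ (your $\Psi$ equals the paper's $k$, since $(d+2)/d=1+2/d$ and $(2d+1)/d=2+1/d$), observe $k(0)=k(1)=0$, factor $k'(t)=(d+2)t^{d-1}m(t)$ (your $K$ equals the paper's $m$), pass to $u=1-t^d$, and run an iterated-derivative sign analysis in the spirit of Lemma~\ref{Lem.horror}. That is exactly what the paper does: it introduces $M(u):=m(t(u))$, shows $M'''<0$ on $(0,1)$, determines the boundary asymptotics of $M'$ and $M''$, and backtracks the sign pattern ($M''$ has one zero, $M'$ has two, $M$ has one interior zero) to conclude that $k$ has a single interior maximum and is hence non-negative. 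The proposal does not carry out this verification, which is the substantive content of the lemma; one cannot simply ``read off'' the sign pattern without computing $M'''$ and checking the endpoint limits in \eqref{ext_M}. There is also a small imprecision in your heuristic: differentiation does not ``kill'' the fractional-power factors --- $M'''$ still contains $(1-u)^{2/d-3}$, $u^{1/d-2}$ and $u^{2/d-3}$ --- what terminates the cascade is that at the third derivative all surviving terms carry the same sign, not that they become polynomial. Your proposed shortcut via Bernoulli-type bounds is not used in the paper and, given that $N/g\to 1$ at $t=0$, any such bound would have to be asymptotically sharp there, which makes a crude comparison unlikely to close; so absent that, the derivative cascade must be executed in full.
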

\begin{proof}
The claim is equivalent to the property that 
$$
  k(t) :=
(d^2-4)\left[(1-t^d)^{1+\frac{2}{d}}-(1-t^d)^{2+\frac{1}{d}} \right]-  g(t) \geq 0.
$$
Since $k(0)=k(1)=0$, in order to prove the claim, 
it is enough to show that~$k$ has a unique extremal point 
and that this point is a maximum. 
We shall proceed as in the previous Lemma~\ref{Lem.horror} 
by proving that~\eqref{g'} holds also for~$k$,
with some (possibly different) $t_3 \in (0,1)$.

Computing the derivative, we find
\begin{equation}\label{deriv_k}
    k'(t)=(d+2) t^{d-1}m(t),
\end{equation}
where
\begin{equation*}
    m(t):=(d-2)(2d+1)(1-t^d)^{1+\frac 1 d}-(d-2)(d+1)(1-t^d)^{\frac 2 d}-d^2(1-t^2)+2d(1-t^d).
\end{equation*}
Note that $m(0)=m(1)=0$.
In order to simplify the computations, 
we define the change of variable $u:=1-t^d\in(0,1)$
leading to  
\begin{equation}\label{M}
    M(u):=m(t(u))= d^2(1-u)^{\frac{2}{d}}-(d-2)(d+1)u^{\frac{2}{d}}+(2d^2-3d-2)u^{1+\frac{1}{d}}+2d u -d^2.
\end{equation}
By computing the first and second derivatives, we find
\begin{equation} \label{ext_M}
  \begin{aligned}
  \lim_{u\to0^+}M'(u)=-\infty, 
  \\
  \lim_{u\to1^-}M'(u) =-\infty,
  \end{aligned}
  \begin{aligned}
  \lim_{u\to0^+}M''(u) &=+\infty, 
  \\
  \qquad\lim_{u\to1^-}M''(u) &=-\infty.
  \end{aligned}
\end{equation}
As in Lemma~\ref{Lem.horror}, the key is to compute the third derivative
$$
  M'''(u)=
  -\frac{(d-1)(d-2)}{d^3} 
  \left[
  4 d^2 (1-u)^{\frac{2}{d}-3}
  + (d+1) (2d+1) u^{\frac{1}{d}-2}
  + 4 (d+1)(d-2) u^{\frac{2}{d}-3}
  \right] .
$$ 
Since $M'''(u) < 0$ for every $u \in (0,1)$,
$M''$ is is strictly decreasing in $(0,1)$.
Recalling the boundary asymptotics for~$M''$ from~\eqref{ext_M},
we deduce that there exists a unique point $u_0\in (0,1)$ 
such that $M''(u_0)=0$. 
Consequently, $u_0\in (0,1)$ is the unique interior maximum point of $M'$. 
Since~$M$ is non-constant and $M(0) = 0 = M(1)$, 
it is impossible that $M'(u) \leq 0$ for every $u \in (0,1)$.
Hence, $M'(u_0)>0$.
Taking also into account the boundary asymptotics for~$M'$ from~\eqref{ext_M},
there exist exactly two points $u_1<u_2$ in $(0,1)$ 
such that $M'(u_1)=M'(u_2)=0$.
In summary, 
\begin{equation*}
    \begin{cases}
        M'<0 \;&\text{in}\quad(0,u_1), \\
        M'>0\;&\text{in}\quad(u_1,u_2), \\
       M'<0 \;&\text{in}\quad(u_2,1).
         \end{cases}
\end{equation*}
This means  that $u_1$ and $u_2$ are the two unique extremal points of~$M$  
satisfying $u_1<u_2$ and 
$$ M(u_1)=\min_{[0,1]}M<0, \qquad M(u_2)=\max_{[0,1]}M>0.$$ It follows that there exists a unique $u_3\in (0,1)$  such that $M(u_3)=0$ 
(actually, we know that $u_3 \in (u_1,u_2)$).
Recalling the definition of the function $M$ in \eqref{M}, 
we have proved that there exists a unique $t^*\in (0,1)$ such that 
\begin{equation*}
    \begin{cases}
        m(t)>0 \;&\text{in}\quad (0,t^*), \\
    m(t^*)=0, \\
       m(t)<0 \;&\text{in}\quad (t^*,1).
         \end{cases}
\end{equation*}

By \eqref{deriv_k}, we have that $t^*\in (0,1)$ is the unique extremal point of $k$ and it such that 
$$  k(t^*)=\max_{(0,1)}k>0. $$
Finally, by the fact that $k(0)=0=k(1)$, 
we conclude that $k>0$ in $(0,1)$.
\end{proof} 

The lemma implies that~\eqref{any.iso} holds provided that 
$ 
  \alpha \geq -\frac{2}{R} 
$.
In addition, the hypothesis of Lemma~\ref{Lem.core} requires
$
  \alpha > -\sigma_1(\Omega) 
$.
By the Weinstock inequality generalised 
to higher dimensions under the isochoric constraint~\cite{Brock_2001}, 
$
  \sigma_1(\Omega) \leq \sigma_1(B_R) = \frac{1}{R}
$. 
This concludes the isochoric part of the proof of Theorem~\ref{Thm.all}.

\begin{Remark}
Given any positive $R_1 < R_2$,
both the conditions~\eqref{isoperimetric.all} and~\eqref{iso_d}  
are equivalent to 
$$
  -\alpha R \leq 
  \frac{\displaystyle
  \frac{|B_R|^2}{|\partial B_R|} - \frac{|\Omega|^2}{|\partial\Omega|}}
  {\tau_D(B_R) - \tau_{DN}(A_{R_1,R_2})} \, R
  \,,
$$
where $|\Omega| := |A_{R_1,R_2}|$ and $|\partial\Omega| := |\partial B_{R_2}|$
(\ie, the numbers~$|\Omega|$ and $|\partial\Omega|$ 
are uniquely determined by the given radii $R_1,R_2$,
the initial problem of a set~$\Omega$ is disregarded now).
The isoperimetric (respectively, isochoric) constraint means
$|\partial B_R| = |\partial B_{R_2}|$ 
(respectively, $|B_R| = |A_{R_1,R_2}|$),
which corresponds to the choice 
$R:=R_2$ (respectively, $R:=(R_2^d-R_1^d)^{1/d}$).
The fact that the right-hand side of~\eqref{isoperimetric.all} 
(respectively, of~\eqref{iso_d}) is bounded from below by~$2$
(respectively, by~$1$, see Lemma~\ref{Lem.key}) 
establishes a non-trivial quantitative estimate
\begin{equation}\label{quantitative}
  \tau_D(B_R) - \tau_{DN}(A_{R_1,R_2})
  \leq 
  c \, R
  \left(
  \frac{|B_R|^2}{|\partial B_R|} - \frac{|\Omega|^2}{|\partial\Omega|}
  \right)
  ,
\end{equation}
where $c:=1/2$ or $c:=1$ 
in the isoperimetric or isochoric constraint, respectively. 
Note that both sides of~\eqref{quantitative} are positive.
\end{Remark}

\subsection*{Acknowledgement}
N.~Gavitone and G.~Paoli  were partially supported by Gruppo Nazionale per l'Analisi Matematica, la Probabilità e le loro Applicazioni
(GNAMPA) of Istituto Nazionale di Alta Matematica (INdAM).  

N.~Gavitone was supported by the Project MUR PRIN-PNRR 2022: ``Linear and
Nonlinear PDE'S: New directions and Applications'', 
P2022YFA and by GNAMPA group of INdAM.

G.~Paoli was supported by ``INdAM - GNAMPA Project", codice CUP E5324001950001
    and  by the Project PRIN 2022 PNRR:  ``A sustainable and trusted Transfer Learning platform for Edge Intelligence (STRUDEL)", CUP E53D23016390001, in the framework of European Union - Next Generation EU program

D.~Krej\v{c}i\v{r}{\'\i}k was supported
by the grant no.~26-21940S
of the Czech Science Foundation.

%
\bibliographystyle{amsplain}
\bibliography{biblio14}

\end{document}